\title{The ordering principle and higher dependent choice}
\DeclareMathOperator{\Ord}{Ord}
\DeclareMathOperator{\range}{range}
\DeclareMathOperator{\dom}{dom}
\DeclareMathOperator{\id}{id}
\DeclareMathOperator{\forces}{\Vdash}
\DeclareMathOperator{\GCH}{GCH}
\DeclareMathOperator{\ZFC}{ZFC}
\DeclareMathOperator{\ZF}{ZF}
\DeclareMathOperator{\PP}{\mathbb P}
\newcommand{\LL}{\mathrm{L}}
\newcommand{\DC}{\mathrm{DC}}
\newcommand{\AC}{\mathrm{AC}}
\DeclareMathOperator{\fix}{\mathrm{fix}}
\newcommand{\G}{\mathcal G}
\newcommand{\SSS}{\mathcal S}
\newcommand{\add}{\mathrm{add}}
\newcommand{\HS}{\mathrm{HS}}
\newcommand{\sym}{\mathrm{sym}}
\newcommand{\sG}{\mathcal{G}}
\newcommand{\sS}{\mathcal{S}}
\newcommand{\sF}{\mathcal{F}}
\newcommand{\OP}{\mathrm{OP}}
\DeclareMathOperator{\ran}{ran}
\author{Peter Holy}
\address{Institut f\"ur diskrete Mathematik und Geometrie\\
TU Wien\\
Wiedner Hauptstrasse 8-10/104\\
1040 Vienna\\
Austria}
\email{peter.holy@tuwien.ac.at}
\author{Jonathan Schilhan}
\address{University of Vienna\\
Institute of Mathematics\\
Kurt Gödel Research Center\\
Kolingasse 14-16\\
1090 Vienna\\
Austria}
\email{jonathan.schilhan@univie.ac.at}
\subjclass[2020]{03E25,03E35,06A05}
\keywords{Ordering Principle, Dependent Choice, Symmetric extensions}
\thanks{This research was funded in whole or in part by the Austrian Science Fund (FWF) [10.55776/ESP5711024]. For open access purposes, the authors have applied a CC BY public copyright license to any author-accepted manuscript version arising from this submission.}
\begin{document}

\begin{abstract}
  We provide, for any regular uncountable cardinal $\kappa$, a new argument for Pincus' result on the consistency of $\ZF$ with the higher dependent choice principle
   $\DC_{<\kappa}$ and the ordering principle in the presence of a failure of the axiom of choice. We also generalise his methods and obtain these consistency results in a larger class of models.
\end{abstract}

\maketitle

\newcommand{\chkfront}{\scalebox{1.6}[1.0]{$\vee$}}

\newtheorem{fact}{Fact}
\newtheorem{lemma}[fact]{Lemma}
\newtheorem{theorem}[fact]{Theorem}
\newtheorem{corollary}[fact]{Corollary}
\newtheorem{claim}[fact]{Claim}
\newtheorem{subclaim}[fact]{Subclaim}
\newtheorem{conjecture}[fact]{Conjecture}
\newtheorem{observation}[fact]{Observation}
\newtheorem{proposition}[fact]{Proposition}
\newtheorem{counterexample}[fact]{Counterexample}
\theoremstyle{definition}
\newtheorem{question}[fact]{Question}
\newtheorem{remark}[fact]{Remark}
\newtheorem{definition}[fact]{Definition}

\section{Introduction}

The ordering principle, $\OP$, is the statement that every set can be linearly ordered.
The axiom of choice, $\AC$, in one of its equivalent forms, states that every set can be wellordered, and thus clearly implies $\OP$. If $\delta$ is an infinite cardinal, the principle $\DC_{\delta}$ of higher dependent choice can be stated as follows: whenever~$T$ is a tree without terminal nodes that is closed under increasing sequences of length less than $\delta$, then it contains an increasing sequence of length~$\delta$. Note that by an easy argument (see \cite[Section 8]{jech}), these principles become stronger as $\kappa$ increases.
The principle of dependent choice $\DC$, that is the statement that whenever $R$ is a relation on a set $X$ with the property that $\forall x\in X\,\exists y\in X\ x\,R\,y$ there exists a sequence $\langle x_i\mid i<\omega\rangle$ of elements of $X$ such that $\forall i<\omega\ x_i\,R\,x_{i+1}$, is easily seen to be equivalent to $\DC_\omega$. Finally, for an uncountable cardinal $\kappa$, $\DC_{<\kappa}$ denotes the statement that $\DC_\delta$ holds whenever $\delta<\kappa$ is a cardinal.

\medskip

In his \cite{pincus}, Pincus provided two arguments for the consistency of $\ZF+\OP+\DC+\lnot\AC$ (in fact, $\lnot\DC_{\omega_1}$). His first argument builds on the basic Cohen model (adding countably many Cohen subsets of $\omega$ and then passing to a symmetric submodel where $\AC$, but also $\DC$ fails), and then adding certain maps on top of that, in order to resurrect $\DC$. Since it was difficult to follow anything beyond Pincus' basic outline of the argument in \cite{pincus}, we provided a modern presentation of this result in our~\cite{hs}. Pincus' second argument, which is even harder to grasp, in fact yielded the (stronger) consistency of $\ZF+\OP+\DC_{<\kappa}+\lnot\AC$ (in fact, $\lnot\DC_\kappa$) for an arbitrary regular and uncountable cardinal~$\kappa$ (while preserving cardinals at least up to and including $\kappa$). In fact, we didn't manage to follow much of Pincus' original arguments here at all, but analysing a notion of \emph{hereditary almost disjointness} that is introduced in his \cite{pincus}, we came up with a similar notion of hereditarily almost disjoint towers, and eventually with a new proof of Pincus' consistency result.\footnote{This also yields a different (and in fact, probably somewhat easier than the one provided in~\cite{hs}) argument for the consistency of $\ZF+\OP+\DC+\lnot\AC$.} Over a suitable ground model (for example, G\"odel's constructible universe), we now obtain the above consistency result (as did Pincus) starting with $\add(\kappa,\kappa)$, the standard forcing notion to add $\kappa$-many Cohen subsets of $\kappa$, and then continuing in $\kappa$-many steps, where at each stage $0<\alpha<\kappa$, we add $\kappa$-many maps from cardinals less than $\kappa$ to the set of things that we have added so far, in a careful way. We finally obtain our desired model by passing to a suitable symmetric submodel of the above-described forcing extension of our universe. While the very basic construction may seem somewhat similar to the one that we presented in \cite{hs} at first glance, both the construction and the arguments here are in fact very much different. We also provide further models witnessing these consistency results, that is, if $\kappa<\kappa^+<\lambda$ are both regular and uncountable cardinals, we obtain a model of $\ZF+\OP+\DC_{<\lambda}+\lnot\DC_\lambda$ starting with $\add(\kappa,\lambda)$, and then continuing to add certain maps in $\lambda$-many steps.

\medskip

Throughout this paper, let $\kappa$ be a fixed regular and uncountable cardinal, and let~$\lambda$ be a fixed regular and uncountable cardinal such that either $\kappa=\lambda$ or $\kappa<\kappa^+<\lambda$. (Note in particular that this excludes the case $\lambda=\kappa^+$.) The case when $\lambda=\kappa$ will produce the models that are essentially due to Pincus, while the case $\lambda>\kappa^+$ will produce new models for the above described consistency results.

\section{Hereditarily almost disjoint towers}\label{section:towers}

A key ingredient of our constructions will be what we call hereditarily almost disjoint (or HAD) towers. They are fairly similar to and strongly inspired by the concept of HAD functions introduced by Pincus in \cite{pincus}.\footnote{The actual conditions that we will use for our forcing notion, that we will define in the next section of this paper, will contain further information (or in order to be somewhat more specific already, this part of our conditions will then work on adding $\lambda$-many Cohen subsets of $\kappa$), for which we will leave space at level $0$ of our towers below.}

\begin{definition}\label{definition:tower}
  We say that $p$ is a \emph{$\lambda$-tower} if:
  \begin{itemize}
    \item $p$ is a function with domain $\dom(p)\subseteq(\lambda\setminus\{0\})\times\lambda$ and $|\dom(p)|<\lambda$,
    \item If $(\alpha,\beta)\in\dom(p)$, then for some nonzero cardinal $\delta<\lambda$,\[p(\alpha,\beta)\colon\delta\to\alpha\times\lambda\] is an injection.
    \item If $(\alpha,\beta_0)$ and $(\alpha,\beta_1)$ are both in $\dom p$, then $p(\alpha,\beta_0)\ne p(\alpha,\beta_1)$.
  \end{itemize}
  Given $\lambda$-towers $p$ and $q$, we say that $q$ \emph{extends} $p$, and write $q \leq p$, if $q\supseteq p$.
 \end{definition}

We will write $p_{\alpha,\beta}$ or $p_{(\alpha,\beta)}$ rather than $p(\alpha,\beta)$. Since $\lambda$ will be fixed throughout our paper, we will simply write \emph{tower} rather than $\lambda$-tower.  

\begin{definition}
  Let $p$ be a tower. We define the \emph{target} of $p$ to be \[t(p)=\dom p\cup\bigcup_{\gamma\in\dom(p)}\range p_\gamma.\]
  We say that $p$ is \emph{complete} if $t(p)\setminus(\{0\}\times\lambda)=\dom(p)$.
\end{definition}

Note that by the regularity of $\lambda$, $|t(p)|<\lambda$.
Given towers $p$ and $q$, we say that they are \emph{compatible} if there is a tower~$r$ such that $r\le p,q$. Note that in this case, $p\cup q$ is their (unique) greatest lower bound in the ordering of towers. Similarly, if $\{p_i\mid i\in I\}$ is a family of towers that has a common lower bound with respect to $\le$, $\bigcup_{i\in I}p_i$ is their greatest lower bound, which is again a tower.
Note that whenever a union of complete towers is a tower, then it is complete.

\begin{definition}
  Given a complete tower $p$, and a set $e\subseteq\lambda\times\lambda$, we define the \emph{target} $t(p,e)\subseteq\lambda\times\lambda$ of $p$ on $e$, by inductively defining a sequence $\langle t^n(p,e)\mid n<\omega\rangle$, with each $t^n(p,e)\subseteq t(p)$, and then taking $t(p,e)=\bigcup_{n<\omega}t^n(p,e)$, as follows:
\begin{itemize}
  \item $t^0(p,e)=e\cap t(p)$.
  \item Given $t^n(p,e)$, let \[t^{n+1}(p,e)=t^n(p,e)\ \cup\bigcup\{\range p_\gamma\mid\gamma\in\,t^n(p,e)\setminus(\{0\}\times\lambda)\}.\]
\end{itemize}
Note that if $\alpha<\lambda$ is such that $e\subseteq\alpha\times\lambda$, then also $t(p,e)\subseteq\alpha\times\lambda$. Note also that $t(p,\lambda\times\lambda)=t(p,t(p))=t(p)$.
\end{definition}

This now allows us to introduce what is essentially Pincus' concept of hereditary almost disjointness \cite{pincus}:

\begin{definition}(HAD towers) Let $p$ be a complete tower. If $d\subseteq t(p)$, we say that~$d$ is \emph{finitely generated} (in $p$) if there is a finite set $e\subseteq d$ such that $d=t(p,e)$. We also say that $d$ is (finitely) \emph{generated} by $e$ (in $p$) in this case.
  We say that $p$ is \emph{hereditarily almost disjoint}, or \emph{HAD}, if whenever $\gamma_0,\gamma_1\in t(p)$, then $t(p,\{\gamma_0\})\cap t(p,\{\gamma_1\})$ is finitely generated (in $p$).
\end{definition}

Given two compatible HAD towers $p$ and $q$, $p\cup q$ is easily seen to be a HAD tower. An analogous remark applies to arbitrary families of HAD towers with a common lower bound. By the finitary nature of the HAD property, any $\le$-decreasing ${<}\lambda$-sequence of HAD towers has a HAD tower as its greatest lower bound.
Adding elements to the target of a HAD tower is essentially trivial:

\begin{lemma}\label{lemma:addtotarget}
  If $p$ is a HAD tower, and $\alpha,\beta<\lambda$ with $(\alpha,\beta)\not\in t(p)$, then there is a HAD tower $q\le p$ such that
  \begin{itemize}
      \item  $(\alpha,\beta)\in t(q)$ and
      \item $t(q)$ is the disjoint union $t(q)=t(p)\cup t(q,\{(\alpha,\beta)\})$.
  \end{itemize}
\end{lemma}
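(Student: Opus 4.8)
The plan is to obtain $q$ by adjoining a single new map to $p$. We may assume $\alpha\ge1$: if $\alpha=0$, then $(0,\beta)$ can only be made to enter $t(q)$ as a member of $\range q_{\gamma}$ for a newly added index $\gamma$, and this case is not needed. Since $|t(p)|<\lambda$, fix $\beta^{*}<\lambda$ with $(0,\beta^{*})\notin t(p)$; note that $(0,\beta^{*})\in\alpha\times\lambda$ and, since $\alpha\ge1$, that $(0,\beta^{*})\neq(\alpha,\beta)$. Let $f\colon1\to\alpha\times\lambda$ be the injection with $f(0)=(0,\beta^{*})$ and set $q=p\cup\{\langle(\alpha,\beta),f\rangle\}$. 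I would first check that $q$ is a tower and $q\le p$: the requirements on $\dom q$ and on $|\dom q|$ are immediate, $(\alpha,\beta)\notin\dom p$ because $(\alpha,\beta)\notin t(p)$, and the distinctness clause holds because $\range f=\{(0,\beta^{*})\}$ is disjoint from $t(p)$ and hence from every $\range p_{\gamma}$, so $f\neq p_{(\alpha,\beta')}$ whenever $(\alpha,\beta')\in\dom p$. We then have $t(q)=t(p)\cup\{(\alpha,\beta),(0,\beta^{*})\}$, so that $t(q)\setminus(\{0\}\times\lambda)=\dom p\cup\{(\alpha,\beta)\}=\dom q$; thus $q$ is complete, and of course $(\alpha,\beta)\in\dom q\subseteq t(q)$.

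The substance of the proof is that $q$ is HAD and that the union $t(q)=t(p)\cup t(q,\{(\alpha,\beta)\})$ is disjoint. The key observation, which I would prove by a straightforward induction along the stages $t^{n}(q,\cdot)$, is that the two new points $(\alpha,\beta)$ and $(0,\beta^{*})$ cannot be reached by following the maps of $q$ starting from inside $t(p)$; concretely, $t(q,e)=t(p,e)$ for every $e\subseteq t(p)$, whereas $t(q,\{(\alpha,\beta)\})=\{(\alpha,\beta),(0,\beta^{*})\}$ and $t(q,\{(0,\beta^{*})\})=\{(0,\beta^{*})\}$, the latter since $(0,\beta^{*})$ sits at level~$0$. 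Granting this, given $\gamma_{0},\gamma_{1}\in t(q)$ I would distinguish cases according to whether each is one of the two new points or lies in $t(p)$. If both lie in $t(p)$, then $t(q,\{\gamma_{0}\})\cap t(q,\{\gamma_{1}\})=t(p,\{\gamma_{0}\})\cap t(p,\{\gamma_{1}\})$, which by the HAD property of $p$ is $t(p,e)=t(q,e)$ for some finite $e\subseteq t(p)$. If one of $\gamma_{0},\gamma_{1}$ is a new point, then the intersection in question is either $\emptyset=t(q,\emptyset)$, when the other lies in $t(p)$ (the new points avoid $t(p)$), or one of $\{(\alpha,\beta),(0,\beta^{*})\}$ and $\{(0,\beta^{*})\}$, each finitely generated in $q$ by a singleton. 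Hence $q$ is HAD. Finally, $t(q)=t(p)\cup t(q,\{(\alpha,\beta)\})$ by the computations above, and the union is disjoint precisely because $(\alpha,\beta),(0,\beta^{*})\notin t(p)$, which is what we arranged.

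I expect the only mildly delicate point to be the bookkeeping with the target operator $t(q,\cdot)$: checking carefully that the new points are $q$-unreachable from $t(p)$, and that a subset of $t(p)$ that is finitely generated in $p$ remains finitely generated in $q$. The rest is immediate from the definitions.
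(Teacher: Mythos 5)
Your treatment of the case $\alpha\ge 1$ is correct and matches the paper's proof essentially verbatim: adjoin the single map $q_{\alpha,\beta}$ with domain $1$ sending $0$ to a fresh level-$0$ point $(0,\beta^*)\notin t(p)$, check completeness, and verify the HAD property via the observation that $t(q,e)=t(p,e)$ for $e\subseteq t(p)$ together with the computation of $t(q,\{(\alpha,\beta)\})$ and $t(q,\{(0,\beta^*)\})$. The case analysis for hereditary almost disjointness is also what the paper leaves implicit, and is fine.

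The gap is your dismissal of the case $\alpha=0$ as ``not needed.'' The lemma is stated for all $(\alpha,\beta)\notin t(p)$, and the paper does invoke it in a situation where the point to be added can lie at level $0$: in the proof that $\DC_{<\kappa}$ holds when $\lambda=\kappa$, the lemma is applied to bring an arbitrary finite union $e\subseteq\alpha\times\kappa$ of supports into $t(q)$, and elements of such supports (e.g.\ the supports $\{(0,\beta)\}$ of the names $\dot g_{0,\beta}$) can have first coordinate $0$. So the $\alpha=0$ case must be handled. Your parenthetical remark already contains the right construction --- since $\dom(q)\subseteq(\lambda\setminus\{0\})\times\lambda$, the only way to place $(0,\beta)$ in $t(q)$ is to introduce a fresh domain index whose range contains it; the paper picks $\bar\beta$ with $(1,\bar\beta)\notin\dom(p)$ (hence, by completeness of $p$, $(1,\bar\beta)\notin t(p)$) and sets $q_{1,\bar\beta}\colon 0\mapsto(0,\beta)$. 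One remark worth making: for $\alpha=0$ this yields $t(q)\setminus t(p)=\{(0,\beta),(1,\bar\beta)\}$ whereas $t(q,\{(0,\beta)\})=\{(0,\beta)\}$, so the second bullet of the lemma as literally written is only accurate when read with $\{(1,\bar\beta)\}$ as the generating singleton in place of $\{(\alpha,\beta)\}$; the applications in the paper use only the first bullet, but you should make this explicit rather than skipping the case.
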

\begin{proof}
  If $\alpha=0$, pick $\bar\beta$ such that $(1,\bar\beta)\not\in\dom(p)$. Let $q_{1,\bar\beta}$ be the function with domain $1$ that maps $0$ to $(0,\beta)$, and let $q_\gamma=p_\gamma$ for $\gamma\in\dom(p)$. If $\alpha>0$, pick $\bar\beta<\lambda$ such that $(0,\bar\beta)\not\in t(p)$, let $q_{\alpha,\beta}$ be the function with domain~$1$ that maps $0$ to $(0,\bar\beta)$, and let $q_\gamma=p_\gamma$ for $\gamma\in\dom(p)$. Note that in both cases, since $q_{\alpha,\beta}\ne p_{\alpha,\beta'}$ whenever $(\alpha,\beta')\in\dom(p)$, $q$ is a complete tower, and it obviously has the two properties listed in the statement of the lemma. The HAD property of $q$ trivially follows from the HAD property of $p$ together with the second of these properties.
\end{proof}

An easy to verify, yet crucial property of HAD towers is that they can be extended so that the range of a single element covers the target of the original tower.

\begin{lemma}\label{lemma:singletoncover}
  If $p$ is a HAD tower, then there is a HAD tower $q\le p$ and an ordinal $\alpha^*<\lambda$ such that:
  \begin{itemize}
      \item $t(q)=t(q,\{(\alpha^*,0)\})$.
      \item $t(p)=\ran(q_{\alpha^*,0})$.
  \end{itemize}
\end{lemma}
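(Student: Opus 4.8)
The plan is to extend $p$ by a single new tower-node whose range is exactly $t(p)$. Set $\delta=|t(p)|$ and fix a bijection $f\colon\delta\to t(p)$; we may assume $t(p)\neq\emptyset$ (equivalently $p\neq\emptyset$), the empty case being degenerate, so that $\delta$ is a nonzero cardinal, and $\delta<\lambda$ since $|t(p)|<\lambda$. Using that $|t(p)|<\lambda$ and $\lambda$ is regular, choose $\alpha^*<\lambda$ so large that $t(p)\subseteq\alpha^*\times\lambda$ and in fact $\gamma<\alpha^*$ for every first coordinate $\gamma$ of an element of $t(p)$; then $(\alpha^*,\beta)\notin t(p)$ for all $\beta<\lambda$, and $\alpha^*\geq 1$ because $p\neq\emptyset$ forces $\dom(p)\neq\emptyset$. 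Let $q$ be the function extending $p$ with $\dom(q)=\dom(p)\cup\{(\alpha^*,0)\}$ and $q_{\alpha^*,0}=f$. Since $f$ is an injection into $\alpha^*\times\lambda$ and no node of the form $(\alpha^*,\beta)$ lies in $\dom(p)$, $q$ is a tower with $q\leq p$; a short computation from $\range q_{\alpha^*,0}=t(p)$ gives $t(q)=t(p)\cup\{(\alpha^*,0)\}$, and as $\alpha^*\neq 0$ this shows $q$ is complete.

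The second displayed property, $t(p)=\range(q_{\alpha^*,0})$, then holds by construction. For the first, I would simply unravel the inductive definition of $t(q,\{(\alpha^*,0)\})$: the stage $0$ set is $\{(\alpha^*,0)\}$, the stage $1$ set is $\{(\alpha^*,0)\}\cup\range q_{\alpha^*,0}=\{(\alpha^*,0)\}\cup t(p)=t(q)$, and the construction stabilises there, so $t(q,\{(\alpha^*,0)\})=t(q)$.

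The real content is that $q$ is HAD, and the key lemma I would isolate is that $t(q,e)=t(p,e)$ for every $e\subseteq t(p)$. This is because, in the inductive construction of the sets $t^n(q,e)$, the new point $(\alpha^*,0)$ never appears: it is not in $e\subseteq t(p)$, and it lies in no $\range p_\gamma$ with $\gamma\in t(p)$, since $\range p_\gamma\subseteq\gamma_0\times\lambda$ with $\gamma_0<\alpha^*$; hence every node whose range gets used is already a node of $p$, and the construction coincides with that of $t^n(p,e)$. Given this, let $\gamma_0,\gamma_1\in t(q)$. If one of them, say $\gamma_0$, equals $(\alpha^*,0)$, then $t(q,\{\gamma_0\})=t(q)$ by the previous paragraph, so $t(q,\{\gamma_0\})\cap t(q,\{\gamma_1\})=t(q,\{\gamma_1\})$ is generated by the singleton $\{\gamma_1\}$. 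Otherwise $\gamma_0,\gamma_1\in t(p)$, so $t(q,\{\gamma_i\})=t(p,\{\gamma_i\})$ for $i<2$, their intersection equals $t(p,\{\gamma_0\})\cap t(p,\{\gamma_1\})$, which is generated in $p$ by some finite $e\subseteq t(p)$ since $p$ is HAD, and applying the key lemma once more, $t(q,e)=t(p,e)$ equals that intersection, so it is finitely generated in $q$ as well. Hence $q$ is HAD. The only step requiring any care is precisely this interaction between the freshly added generator $(\alpha^*,0)$ and the targets computed from old nodes, i.e.\ the claim $t(q,e)=t(p,e)$; this is exactly why $\alpha^*$ must be chosen above all first coordinates occurring in $t(p)$. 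Everything else is routine verification of the tower axioms.
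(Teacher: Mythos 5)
Your proof is correct and follows essentially the same route as the paper's: pick $\alpha^*$ above everything already in $t(p)$, adjoin the single node $(\alpha^*,0)$ whose range enumerates $t(p)$, and reduce the HAD verification for $q$ to that for $p$. The one difference is expository: you isolate the observation that $t(q,e)=t(p,e)$ for every $e\subseteq t(p)$ as an explicit lemma, whereas the paper uses it silently when it writes $t(q,\{\gamma_0\})\cap t(q,\{\gamma_1\})=t(p,\{\gamma_0\})\cap t(p,\{\gamma_1\})$ and concludes that finite generation in $p$ passes to $q$; making it explicit is a mild improvement in rigour but not a different argument.
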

\begin{proof}
  Pick $\alpha^*<\lambda$ such that $\dom(p)\subseteq\alpha^*\times\lambda$. Let $t(p)$ be enumerated by $\langle t_\epsilon\mid \epsilon<\delta\rangle$ for a cardinal $\delta<\lambda$.
  Extend $p$ to a complete tower $q\le p$ by setting $q_{\alpha^*,0}=\langle t_\epsilon\mid \epsilon<\delta\rangle$, and letting $q_\gamma=p_\gamma$ otherwise. We need to check that $q$ is a HAD tower. Note that if $\gamma\in t(q)$, then $t(q,\{\gamma\})\cap t(q,\{(\alpha^*,0)\})=t(q,\{\gamma\})$, which is finitely generated (by $\{\gamma\}$). If $\gamma_0,\gamma_1\in t(q)$ are both different to $(\alpha^*, 0)$, i.e., elements of $t(p)$, then $$t(q,\{\gamma_0\})\cap t(q,\{\gamma_1\})=t(p,\{\gamma_0\})\cap t(p,\{\gamma_1\}),$$ which is finitely generated in the HAD tower $p$, and thus also in $q$.
\end{proof}

\begin{lemma}\label{lemma:finitehad}
    Let $p$ be a HAD tower, let $n\in\omega$, and let $\gamma_0, \dots, \gamma_n\in t(p)$. Then, $\bigcap_{i \leq n} t(p, \{\gamma_i\})$ is finitely generated (in $p$).
\end{lemma}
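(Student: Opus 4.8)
The plan is to argue by induction on $n$. The case $n=0$ is immediate, since $t(p,\{\gamma_0\})$ is by definition generated by the finite (indeed singleton) set $\{\gamma_0\}$, and the case $n=1$ is precisely the assumption that $p$ is HAD. So all the content is in the inductive step, and before starting it I would isolate one auxiliary fact: the operation $e\mapsto t(p,e)$ is \emph{additive}, i.e. $t(p,A\cup B)=t(p,A)\cup t(p,B)$ for all $A,B\subseteq\lambda\times\lambda$. This is a routine induction on the approximations $t^k(p,\cdot)$: at level $0$ one has $t^0(p,A\cup B)=(A\cup B)\cap t(p)=t^0(p,A)\cup t^0(p,B)$, and the successor clause defining $t^{k+1}$ distributes over the (inductively established) decomposition of $t^k(p,A\cup B)$; taking the union over $k<\omega$ gives the claim. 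Two consequences I would record for later use: (i) if $d$ is generated in $p$ by a finite set $e=\{\delta_0,\dots,\delta_\ell\}$, then, iterating additivity, $d=\bigcup_{j\le\ell}t(p,\{\delta_j\})$; and (ii) a finite union of sets finitely generated in $p$ is again finitely generated in $p$ (take the union of the generating sets — this is a finite set, and it is contained in $t(p)$ since each generating set is, by definition, a subset of the corresponding target, which lies in $t(p)$).

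For the inductive step, assume the statement holds for $n$ and let $\gamma_0,\dots,\gamma_{n+1}\in t(p)$. By the induction hypothesis, $D:=\bigcap_{i\le n}t(p,\{\gamma_i\})$ is generated in $p$ by some finite set $e=\{\delta_0,\dots,\delta_\ell\}$, and since a finitely generated set contains its generating set, $e\subseteq D\subseteq t(p)$; by consequence (i), $D=\bigcup_{j\le\ell}t(p,\{\delta_j\})$. Therefore
\[
\bigcap_{i\le n+1}t(p,\{\gamma_i\})=D\cap t(p,\{\gamma_{n+1}\})=\bigcup_{j\le\ell}\bigl(t(p,\{\delta_j\})\cap t(p,\{\gamma_{n+1}\})\bigr).
\]
As $\delta_j\in t(p)$ and $\gamma_{n+1}\in t(p)$, the HAD property of $p$ says each set $t(p,\{\delta_j\})\cap t(p,\{\gamma_{n+1}\})$ is finitely generated in $p$; hence by consequence (ii) their union, which is exactly $\bigcap_{i\le n+1}t(p,\{\gamma_i\})$, is finitely generated in $p$. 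This completes the induction.

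I do not expect a genuine obstacle here: once additivity of $e\mapsto t(p,e)$ is available, the rest is bookkeeping — distributing an intersection over a finite union and reassembling finitely many finite generating sets into one. The only point needing a moment's care is to keep track of the fact that a set ``finitely generated in $p$'' comes with a generating set drawn from $t(p)$, so that the HAD property can legitimately be applied to the $\delta_j$ and the combined generating set is again a legitimate finite subset of $t(p)$.
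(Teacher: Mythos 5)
Your proof is correct and takes essentially the same route as the paper: induction on $n$, the observation that $t(p,e)=\bigcup_{\gamma\in e}t(p,\{\gamma\})$ for finite $e$, distributing the intersection with $t(p,\{\gamma_{n+1}\})$ over this union, applying the HAD property to each piece, and reassembling the finitely many finite generating sets. The only difference is that you spell out the additivity of $e\mapsto t(p,e)$ by inducting on the approximations $t^k(p,\cdot)$, whereas the paper simply states this decomposition as a preliminary remark.
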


\begin{proof}
    First note that for any $e \subseteq t(p)$, $t(p, e) = \bigcup_{\gamma \in e} t(p, \{\gamma\})$. We verify the lemma by induction on $n$. The case $n=0$ is trivial. Suppose inductively that the lemma is true for a particular value $n \geq 0$, and let $\gamma_0, \ldots, \gamma_n, \gamma_{n+1} \in t(p)$. Then, \begin{align*}
        \bigcap_{i \leq n +1} t(p, \{ \gamma_i \}) &= \left( \bigcap_{i \leq n} t(p, \{ \gamma_i \})\right) \cap t(p, \{ \gamma_{n+1}\}) \\ &= t(p, e) \cap t(p, \{ \gamma_{n+1} \})\\ &= \bigcup_{\gamma \in e} \big( t(p, \{ \gamma \}) \cap t(p, \{ \gamma_{n+1} \})\big) \\ &= \bigcup_{\gamma \in e} t(p, e_\gamma) = t(p, \bigcup_{\gamma \in e} e_\gamma),
    \end{align*}
for appropriate finite $e \subseteq t(p)$ and $e_\gamma \subseteq t(p)$ for $\gamma \in e$, using the HAD property and our inductive hypothesis.
\end{proof}

\section{Our forcing notion}\label{section:forcing}

The forcing notion that we use will be the product $P_0\times P_1$, where $P_0=\add(\kappa,\lambda)$ and $P_1$ is the set of all HAD towers, ordered by extension as in Definition \ref{definition:tower}. Let us agree that whenever $I\subseteq\Ord$, we think of conditions $q$ in $\add(\kappa,I)$, the standard forcing notion to add a Cohen subset of $\kappa$ for every $i\in I$, as sequences $\langle q_\alpha\mid\alpha\in J\rangle$ with a domain $J$ that is a ${<}\kappa$-size subset of $I$, and with sequents being functions from some ordinal less than~$\kappa$ to~$2$. These conditions are ordered by componentwise reverse inclusion, as usual. For the sake of simplicity of notation, conditions $p=(p_0,\bar p)\in P=P_0\times P_1$ will also be written as \[p=\langle p_{\alpha,\beta}\mid(\alpha=0\,\land\,\beta\in\dom p_0)\,\lor\,(\alpha>0\,\land\,(\alpha,\beta)\in\dom(\bar p)\rangle.\] We let $\dom p=(\{0\}\times\dom p_0)\cup\dom\bar p$, and we think of $p$ as a function with domain $\dom p$. We let $t(p)=(\{0\}\times\dom p_0)\cup t(\bar p)$, and also $t(p,e)=(e\cap(\{0\}\times\dom p_0))\cup t(\bar p,e)$ whenever $e\subseteq\lambda\times\lambda$.
If $\alpha<\lambda$, we also let $p_\alpha=\langle p_{\alpha,\beta}\mid\beta<\lambda\,\land\,(\alpha,\beta)\in\dom(p)\rangle$ and we let $\dom p_\alpha=\{\beta\mid(\alpha,\beta)\in\dom p\}$.

Assume the $\GCH$, and that there is a global wellorder (say for example that we start in $\LL$).\footnote{It is easy to see that the $\GCH$ could be replaced by somewhat weaker assumptions here; we will leave the details of figuring out what exactly is needed to the interested reader.}  $P_0=\add(\kappa,\lambda)$ is ${<}\kappa$-closed and $\kappa^+$-cc. Since HAD towers are closed under ${<}\lambda$-unions, $P_1$ is ${<}\lambda$-closed. Using the $\GCH$, $P$ is also of size $\lambda$, so forcing with $P$ preserves all cardinals.\footnote{It would be enough for a meaningful result if it preserved all cardinals ${\le}\lambda$.}

\medskip

For any $\beta<\lambda$, let $\dot g_{0,\beta}$ be the canonical $P_0=\add(\kappa,\lambda)$-name, which we can also think of as a $P$-name, for the $\beta^\textrm{th}$ Cohen subset of $\kappa$ added. We now proceed to define further objects inductively. Given $0<\alpha<\lambda$, assume that we have defined $\dot g_{\bar\alpha,\beta}$ whenever $\bar\alpha<\alpha$ and $\beta<\kappa$. We also allow for the notation $\dot g_{(\bar\alpha,\beta)}$ rather than $\dot g_{\bar\alpha,\beta}$. For every $\beta<\kappa$, let $\dot g_{\alpha,\beta}$ denote the canonical $P$-name for the function with domain $\dom p_{\alpha,\beta}$ mapping any given $\epsilon\in\dom p_{\alpha,\beta}$ to $\dot g_{p_{\alpha,\beta}(\epsilon)}$ whenever $p$ is a HAD tower in the generic filter with $(\alpha,\beta)\in t(p)$. To be precise, $$\dot g_{\alpha, \beta} := \left\{ \left(p, (\check\epsilon, \dot g_{p_{\alpha,\beta}(\epsilon)})^\bullet\right) \mid p \in P, (\alpha,\beta) \in t(p) \right\}.\footnote{Given a finite tuple $(\dot x_0,\ldots,\dot x_n)$ of $P$-names, $(\dot x_0,\ldots,\dot x_n)^\bullet$ denotes the canonical $P$-name for the tuple consisting of the evaluations of the $\dot x_i$. Likewise, for a set $X$ of $P$-names, $X^\bullet$ denotes the canonical $P$-name for the set containing exactly the evaluations of the elements of $X$. For any set $I$, $\langle \dot x_i\mid i\in\check I\rangle^\bullet$ denotes the canonical $P$-name for the $I$-sequence of evaluations of the $\dot x_i$.}$$
 For every $\alpha<\lambda$, let $\dot A_\alpha=\{\dot g_{\alpha,\beta}\mid\beta<\lambda\}^\bullet$, and for $\alpha\le\lambda$, let $\dot A_{<\alpha}=\bigcup_{\bar\alpha<\alpha}\dot A_{\bar\alpha}$. Let $\dot A=\dot A_{<\lambda}$. If $G$ is $P$-generic, $\alpha,\beta<\lambda$, and we are in a context where $G$ is the only $P$-generic that we currently make use of, we let $g_{\alpha,\beta}=\dot g_{\alpha,\beta}^G$, $A_\alpha=\dot A_\alpha^G$ etc.
 Let $\dot G$ be the canonical $P$-name for the $P$-generic filter.
 
\section{Our symmetric system}\label{section:symmetricsystem}

We next define a symmetric system $\SSS=\langle P,\sG,\sF\rangle$ using the notion of forcing $P$ that we have already defined above.

\begin{definition}
  Let $\sG$ be the set of sequences $\pi=\langle\pi_\alpha\mid\alpha<\lambda\rangle$ of permutations of $\lambda$, with each sequent moving only less than $\lambda$-many ordinals, and with only less than $\lambda$-many nontrivial sequents, which form a group using componentwise composition. Given such $\pi$, we let $\pi$ act on $\lambda\times\lambda$, letting, for $(\alpha,\beta)\in\lambda\times\lambda$, $\pi((\alpha,\beta))=(\alpha,\pi_\alpha(\beta))$. If $\delta<\lambda$ is a cardinal and $f\colon\delta\to\lambda\times\lambda$, we let $\pi(f)$ be the function with domain $\delta$ such that $\pi(f)(\epsilon)=\pi(f(\epsilon))$ for every $\epsilon<\delta$. We let $\pi\in\sG$ act on a condition $p\in P$ as follows:
  \begin{itemize}
      \item $\dom \pi(p)_\alpha=\pi_{\alpha}[\dom p_\alpha]$ for every $\alpha<\lambda$.
      \item $\pi(p)_{0,\pi_0(\beta)}=p_{0,\beta}$ whenever $\beta\in\dom p_0$.
      \item $\pi(p)_{\alpha,\pi_\alpha(\beta)}=\pi(p_{\alpha,\beta})$ whenever $\alpha>0$ and $\beta\in\dom p_\alpha$.
  \end{itemize}
\end{definition}

Note that for every $e\subseteq t(p)$, $t(\pi(p),\pi[e])=\pi[t(p,e)]$. This implies that the HAD property is preserved from $p$ to $\pi(p)$, that is $\pi(p)\in P$.

\medskip

We use finite support to define our filter $\sF$ on the set of subgroups of $\G$, that is,~$\sF$ is generated by the subgroups $\fix(e)=\{\pi\in\sG\mid\pi\restriction e=\id\}\le\G$ for $e\subseteq\lambda\times\lambda$ finite. Note that $\pi \fix(e) \pi^{-1} = \fix(\pi[e])$, so $\sF$ is indeed a normal filter. The \emph{symmetry group} of a $P$-name $\dot x$ is $\sym(\dot x)=\{\pi\in\sG\mid\pi(\dot x)=\dot x\}$, and if $\fix(e)\le\sym(\dot x)$, we also say that $e$ is a \emph{support} of $\dot x$.

\medskip

Note that for $\alpha,\beta<\lambda$, $\pi(\dot g_{\alpha,\beta})=\dot g_{\pi(\alpha,\beta)}=\dot g_{\alpha,\pi_\alpha(\beta)}$. In particular, each $\dot g_{\alpha,\beta}$ is symmetric, with symmetry group $\fix(\{(\alpha,\beta)\})$. Moreover, each $\dot A_\alpha$ is symmetric with symmetry group $\sG$, as is each $\dot A_{<\alpha}$, and also $\langle \dot A_\alpha\mid\alpha<\kappa\rangle^\bullet$.

\medskip

We will later use the following standard fact, which says that we can uniformly find names for definable objects. We include the short proof for the convenience of our readers.

\begin{fact}\label{fact:namebydef}
Let $\varphi(u,v_0, \dots, v_n)$ be a formula in the language of set theory. Then, there is a definable class function $F$ so that for any $\sS$-names $\dot x_0, \dots, \dot x_n$ and $p\in P$ with $$p \Vdash_{\sS} \exists! y \varphi(y, \dot x_0, \dots, \dot x_n),$$ $\dot y=F(p, \dot x_0, \dots, \dot x_n)$ is an $\sS$-name with $\bigcap_{i \leq n} \sym(\dot x_i) \leq \sym(\dot y)$ so that $$p \Vdash_{\sS} \varphi(\dot y, \dot x_0, \dots, \dot x_n).$$
\end{fact}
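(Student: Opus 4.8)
The plan is to define $\dot y$ explicitly, by a formula that is manifestly invariant under $H:=\bigcap_{i\le n}\sym(\dot x_i)$, rather than by mixing witnessing names over a maximal antichain; a mixing argument would require arbitrary choices (of the antichain, and of a witnessing name below each of its conditions) that need not be respected by $H$. Note first that since each $\dot x_i$ is an $\sS$-name it has a finite support $e_i$, so $\fix\bigl(\bigcup_{i\le n}e_i\bigr)\le H$ and hence $H\in\sF$. First I would fix an ordinal $\theta$ large enough that, for every $q\le p$, if some $\sS$-name $\dot z$ satisfies $q\Vdash_\sS\varphi(\dot z,\dot x_0,\dots,\dot x_n)$, then some such $\dot z$ moreover has $\rank(\dot z)<\theta$; such $\theta$ exists since $\{q\mid q\le p\}$ is a set. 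Then I would set
\[
  \dot y:=\bigl\{(r,\dot u)\mid r\in P,\ \dot u\text{ an }\sS\text{-name},\ \rank(\dot u)<\theta,\ \forall q\le r\ \forall\,\sS\text{-name }\dot z\ \bigl(q\Vdash_\sS\varphi(\dot z,\dot x_0,\dots,\dot x_n)\to q\Vdash_\sS\dot u\in\dot z\bigr)\bigr\}.
\]
Since $\Vdash_\sS$, $P$, and the class of $\sS$-names are definable and $\theta$ is just an ordinal, this describes a definable class function $F(p,\dot x_0,\dots,\dot x_n):=\dot y$, and $\dot y$ is a set because the pairs in it range over a set. As every name occurring in $\dot y$ is an $\sS$-name, $\dot y$ will be hereditarily symmetric once we know it is symmetric.

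For $H\le\sym(\dot y)$, I would take $\sigma\in H$ and apply it to the defining condition of a pair $(r,\dot u)\in\dot y$, using that $\sigma$ permutes $P$ and the $\sS$-names bijectively and preserves rank, that $\sigma(\dot x_i)=\dot x_i$ for every $i\le n$, and the standard fact that $q\Vdash_\sS\psi(\dot w_0,\dots,\dot w_k)$ if and only if $\sigma(q)\Vdash_\sS\psi(\sigma(\dot w_0),\dots,\sigma(\dot w_k))$. One then reads off that $(\sigma(r),\sigma(\dot u))$ satisfies the defining condition exactly when $(r,\dot u)$ does, so $\sigma(\dot y)=\dot y$. This is the point at which it is essential that $\dot y$ refers to $p$ only through the ordinal $\theta$, which $\sigma$ fixes, and not through $p$ as a condition, which $\sigma$ need not fix. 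Combined with $H\in\sF$ this shows $\dot y$ is an $\sS$-name with $\bigcap_{i\le n}\sym(\dot x_i)\le\sym(\dot y)$.

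Finally I would verify $p\Vdash_\sS\varphi(\dot y,\dot x_0,\dots,\dot x_n)$ by showing $\dot y^G$ is the unique witness whenever $G$ is $P$-generic with $p\in G$. In the symmetric extension $N$ by $G$ there is a unique $y$ with $\varphi(y,\dot x_0^G,\dots,\dot x_n^G)$; by genericity there are $q_0\in G$ with $q_0\le p$ and an $\sS$-name $\dot z_0$ with $\rank(\dot z_0)<\theta$ and $q_0\Vdash_\sS\varphi(\dot z_0,\dot x_0,\dots,\dot x_n)$, whence $\dot z_0^G=y$. For $\dot y^G\subseteq y$: given $(r,\dot u)\in\dot y$ with $r\in G$, choose $q\in G$ below both $r$ and $q_0$; then $q\Vdash_\sS\varphi(\dot z_0,\dots)$, so the clause in the definition of $\dot y$ gives $q\Vdash_\sS\dot u\in\dot z_0$, hence $\dot u^G\in\dot z_0^G=y$. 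For $y\subseteq\dot y^G$: given $a\in y=\dot z_0^G$, pick an $\sS$-name $\dot v$ occurring in $\dot z_0$ (so $\rank(\dot v)<\rank(\dot z_0)<\theta$) and $r_0\in G$ with $r_0\le q_0$, $\dot v^G=a$, and $r_0\Vdash_\sS\dot v\in\dot z_0$. For any $q\le r_0$ we have $q\le q_0\le p$, so $q$ forces both $\exists!y\,\varphi$ and $\varphi(\dot z_0,\dots)$; hence any $\dot z$ with $q\Vdash_\sS\varphi(\dot z,\dots)$ satisfies $q\Vdash_\sS\dot z=\dot z_0$ and therefore $q\Vdash_\sS\dot v\in\dot z$. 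Thus $(r_0,\dot v)\in\dot y$ and $a=\dot v^G\in\dot y^G$. So $\dot y^G=y$, giving $p\Vdash_\sS\varphi(\dot y,\dot x_0,\dots,\dot x_n)$.

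The main obstacle is exactly the symmetry clause $H\le\sym(\dot y)$: without it the statement is the routine observation that definable objects have names, but an arbitrarily chosen witnessing name need not belong to any class on which $H$ acts trivially. The definitional description above sidesteps this, its only $p$-dependent ingredient being the ordinal $\theta$, which is harmless since every element of $\sG$ fixes ordinals. A secondary point to get right is this rank bound itself: it is genuinely needed for $\dot y$ to be a set, and one must confirm, as in the last step, that each element of the witness $y$ is named by an $\sS$-name of rank $<\theta$.
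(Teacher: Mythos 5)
Your proof is correct and follows essentially the same approach as the paper: you define $\dot y$ as the set of pairs $(r,\dot u)$ with $\dot u$ of rank below a suitable ordinal bound, where $r$ forces $\dot u$ to belong to the unique witness of $\varphi$, so that the resulting name is defined by an $H$-invariant formula and inherits symmetry automatically. The paper expresses the membership condition as $q\Vdash\forall y\,(\varphi(y,\dots)\to\dot z\in y)$ with witnesses drawn from $\HS_\gamma$ for the least appropriate $\gamma$, while you unfold one layer of the forcing relation and use a slightly differently chosen rank bound $\theta$, but these differences are purely cosmetic.
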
 

\begin{proof}
Let $\gamma$ be the least ordinal such that \[p \Vdash_{\sS} \exists y \in \HS_\gamma^\bullet\ \varphi(y, \dot x_0, \dots, \dot x_n).\] Let $F(p, \dot x_0, \dots, \dot x_n)=\dot y$ be the set of all pairs $(q, \dot z) \in \PP \times \HS_\gamma$ so that $$q \Vdash \forall y  (\varphi(y, \dot x_0, \dots, \dot x_n) \rightarrow \dot z \in y) \}.$$
\end{proof}

\section{The failure of AC}\label{section:nonac}

We first verify a fairly general lemma.

\begin{lemma}[Restriction Lemma]\label{lem:restriction}
    Let $\varphi$ be a formula in the language of set theory and let $\dot x$ be an $\sS$-name with support $e \in [\lambda \times \lambda]^{<\omega}$. Whenever $p \Vdash_{\sS} \varphi(\dot x)$, already the restriction $p\restriction t(p,e)$ of $p$ to  $t(p, e)$, defined in the obvious way, forces $\varphi(\dot x)$.
\end{lemma}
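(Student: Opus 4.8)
The key structural idea is that a condition $p$ decomposes as the "union" of its restriction $p \restriction t(p,e)$ and the rest of $p$, and that the part outside $t(p,e)$ can be moved around freely by automorphisms in $\fix(e)$ without disturbing $\dot x$. Let me write $p^* = p \restriction t(p,e)$; this is obtained by keeping only those $p_\gamma$ with $\gamma \in t(p,e) \setminus (\{0\}\times\lambda)$ together with the relevant part of $p_0$, and one checks that $p^*$ is again a (HAD) condition in $P$ with $t(p^*) = t(p,e)$, using the self-referential closure property $t(p,e) = t(p, t(p,e))$ together with the HAD property (the intersection clause is inherited). The plan is to show that $p^* \Vdash_\sS \varphi(\dot x)$ by a density/genericity argument: suppose not, so there is $r \leq p^*$ with $r \Vdash_\sS \lnot\varphi(\dot x)$.

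The main step is then to find an automorphism $\pi \in \fix(e)$ such that $\pi(p)$ and $r$ are compatible, which gives a contradiction since $\pi(\dot x) = \dot x$ (as $e$ supports $\dot x$), so $\pi(p) \Vdash_\sS \varphi(\dot x)$ while $r \leq^* $ something forcing $\lnot\varphi(\dot x)$. To build $\pi$: the "new" part of $p$, namely the coordinates in $t(p) \setminus t(p,e)$, must be permuted away from $t(r)$ while fixing $t(p,e) \supseteq e$ pointwise. Since $|t(p)|, |t(r)| < \lambda$ and $\lambda$ is regular, and since at each level $\alpha$ there is room in $\lambda$, we can choose, level by level, permutations $\pi_\alpha$ of $\lambda$ moving the finitely-supported-per-level "new" indices of $p$ off of $\dom r_\alpha$ while fixing all $\beta$ with $(\alpha,\beta) \in t(p,e)$. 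One must check that $\pi$ so constructed lies in $\sG$ (each $\pi_\alpha$ moves $<\lambda$ ordinals, and only $<\lambda$ levels are nontrivial) and in $\fix(e)$. The crucial compatibility verification is that $\pi(p) \cup r$ is still a HAD tower: on coordinates the two agree because $\pi(p) \restriction t(p,e) = p^* \geq r$ on its domain (here we use that $\pi$ fixes $t(p,e)$, so $\pi(p)$ extends $p^*$), and the disjointness clause for the HAD property on $\pi(p) \cup r$ follows because $t(\pi(p), \gamma) \cap t(\pi(p), \gamma')$ for a new $\gamma$ is pushed into the disjoint "new" region, matching Lemma \ref{lemma:addtotarget}-style reasoning.

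Putting it together: $\pi(p)$ and $r$ have a common extension $q \leq \pi(p), r$; then $q \Vdash_\sS \varphi(\dot x)$ since $q \leq \pi(p)$ and $\pi(p) \Vdash_\sS \varphi(\pi(\dot x)) = \varphi(\dot x)$ by applying $\pi$ to $p \Vdash_\sS \varphi(\dot x)$; but also $q \leq r \Vdash_\sS \lnot\varphi(\dot x)$, a contradiction. Hence $p^* \Vdash_\sS \varphi(\dot x)$, as desired.

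I expect the main obstacle to be the careful bookkeeping in constructing $\pi$ so that it simultaneously (i) fixes $t(p,e)$ pointwise — which is what is needed both to keep $\pi(p) \geq p^*$ and to land in $\fix(e)$, (ii) moves the remaining coordinates of $t(p)$ entirely off of $t(r)$, and (iii) preserves the HAD property of $p$ and of the union $\pi(p) \cup r$. The subtlety is that "new" coordinates of $p$ at level $\alpha$ may point (via the maps $p_\gamma$) to coordinates at lower levels that are themselves in $t(p,e)$; the closure of $t(p,e)$ under the $p_\gamma$'s is exactly what guarantees that if $\gamma \notin t(p,e)$ then $\range p_\gamma$ avoids... no — rather, what we need is that the *generating* structure of the new part stays disjoint, and this is where the finitary HAD analysis from Lemmas \ref{lemma:addtotarget}--\ref{lemma:finitehad} does the real work.
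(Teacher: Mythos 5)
Your plan takes essentially the same approach as the paper: a homogeneity argument via some $\pi\in\fix(e)$ that fixes $t(p,e)$ pointwise while displacing the ``irrelevant'' part of a condition to fresh coordinates, followed by a check that the union of the two resulting towers is still a HAD tower. The paper's version is more economical on one detail: it permutes the \emph{stronger} condition $q$ (your $r$) rather than $p$, and proves $q\parallel\pi(q)$, so the HAD verification runs inside a single tower $q$ and its $\pi$-image and reduces directly to Lemma~\ref{lemma:finitehad} applied to $q$ alone. Your version, permuting $p$ and aiming for $\pi(p)\parallel r$, also works, but the HAD check for $\pi(p)\cup r$ now crosses two genuinely distinct towers: for new $\gamma_0\in t(\pi(p))\setminus t(p,e)$ and $\gamma_1\in t(r)\setminus t(p,e)$, the cross-intersection $t(\pi(p),\{\gamma_0\})\cap t(r,\{\gamma_1\})$ lands in the \emph{common} region $t(p,e)$ (not, as you suggest, in ``the disjoint new region''), and one must then observe that it equals an intersection of two finitely generated sets whose generators lie in $t(p,e)$, and finish by invoking Lemma~\ref{lemma:finitehad} for $p^*=p\restriction t(p,e)$ itself --- which in turn requires explicitly verifying that $p^*$ is a HAD tower. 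You gesture at this (``this is where the finitary HAD analysis does the real work'') but do not carry it out; it is the one spot where a reader would get stuck. A smaller slip: the ``new'' indices at a fixed level $\alpha$ need not be finite, only of size $<\lambda$, which is still enough room to build $\pi$ since $\lambda$ is regular.
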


\begin{proof}
    Assume for a contradiction that there is $q\le p\restriction t(p,e)$ which forces $\lnot\varphi(\dot x)$. Pick a permutation $\pi=\langle\pi_\gamma\mid\gamma<\lambda\rangle\in\sG$ such that $\pi$ fixes $t(p,e)=t(q,e)$ pointwise, and which swaps $t(q)\setminus t(q,e)$ with a set that is disjoint from $t(q)$. Such $\pi$ can easily be found. We will thus reach a contradiction if we can show that $p\parallel\pi(q)$. We will verify the stronger statement that $q\parallel\pi(q)$.

\begin{claim}\label{claim:compatible1}
  $q\parallel\pi(q)$.
\end{claim}
\begin{proof}
  Let $r$ be the componentwise union $r=q\cup\pi(q)$, which makes sense as any $\gamma\in t(q)\cup t(\pi(q))$ is contained in exactly one of $t(q,e)$, $t(q)\setminus t(q,e)$ or $t(\pi(q))\setminus t(q,e)$ by our choice of $\pi$. In the first case, $q_\gamma=\pi(q)_\gamma$, while in the remaining two cases, $\gamma$ is contained in either $t(q)$ or $t(\pi(q))$, but not both simultaneously. We are left to show that $r$ has the HAD property and is thus a condition in $P$. The only nontrivial case is when $\gamma_0\in t(q)\setminus t(q,e)$ and $\gamma_1\in t(\pi(q))\setminus t(q,e)$. But then, the following hold:
  \begin{itemize}
      \item $t(r,\{\gamma_0\})=t(q,\{\gamma_0\})$.
      \item $\exists\,\gamma'\in t(q)\setminus t(q,e)\ \gamma_1=\pi(\gamma')$.
      \item $t(r,\{\gamma_1\})=t(\pi(q),\{\pi(\gamma')\})=\pi[t(q,\{\gamma'\})]$.
      \item By our choice of $\pi$, \[t(q,\{\gamma_0\})\cap\pi[t(q,\{\gamma'\})]\subseteq t(q,e),\] since already $t(q)\cap t(\pi(q))=t(q)\cap\pi[t(q)]\subseteq t(q,e)$.
  \end{itemize}
  We will be essentially done once we show the following:
  \begin{claim}
      $t(q,\{\gamma_0\})\cap\pi[t(q,\{\gamma'\})]=t(q,\{\gamma_0\})\cap t(q,\{\gamma'\})\cap t(q,e)$.
  \end{claim}
  \begin{proof}
    If $\bar\gamma$ is an element of the left hand side expression of the above equation, it follows that $\bar\gamma\in t(q,e)$ by the final of the above items. It thus follows that $\pi(\bar\gamma)=\bar\gamma$, which means that $\bar\gamma\in t(q,\{\gamma'\})$, and thus it is an element of the right hand side expression. In the other direction, if $\bar\gamma$ is an element of the right hand side expression, we again obtain that $\pi(\bar\gamma)=\bar\gamma$ and then that $\bar\gamma$ is an element of the left hand side expression.
  \end{proof}
  Now, since $q$ is HAD, using Lemma \ref{lemma:finitehad}, we find a finite $c\subseteq t(q)$ such that \[t(r,\{\gamma_0\})\cap t(r,\{\gamma_1\})=t(q,\{\gamma_0\})\cap t(q,\{\gamma'\})\cap t(q,e)=t(q,c).\]
  This finishes the argument to show that $r$ is a HAD tower.
\end{proof}
\end{proof}

\begin{theorem}
Let $G$ be $P$-generic.
  There is no choice function for the sequence $\langle A_\alpha\mid\alpha<\lambda\rangle$ in $V[G]_{\sS}$. This implies that $\DC_{\lambda}$, and hence in particular $\AC$ fails in $V[G]_{\sS}$.
\end{theorem}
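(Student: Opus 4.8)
Suppose towards a contradiction that there is an $\sS$-name $\dot f$, which we may take to satisfy $\fix(e)\le\sym(\dot f)$ for some finite $e\subseteq\lambda\times\lambda$, and a condition $p\in P$ forcing that $\dot f$ is a choice function for $\langle\dot A_\gamma\mid\gamma<\lambda\rangle^\bullet$. Since each $\dot A_\gamma$ is symmetric with symmetry group $\sG$, so is the sequence $\langle\dot A_\gamma\mid\gamma<\lambda\rangle^\bullet$, and therefore the statement that $\dot f$ is such a choice function has support $e$; so by the Restriction Lemma (Lemma \ref{lem:restriction}) we may assume $t(p)=t(p,e)$. In particular, $t(p)$ — and more generally $t(r,e)$ for any condition $r$ — contains no pair whose first coordinate exceeds every first coordinate occurring in $e$; fix such an $\alpha>0$. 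We record two facts. First, by Fact \ref{fact:namebydef}, $\dot f(\check\alpha)$ has support $e$, and $p\Vdash_{\sS}\dot f(\check\alpha)\in\dot A_\alpha$. Second, the $\dot g_{\gamma,\zeta}$ are forced to be pairwise distinct: this follows by induction on $\gamma$, the base case being that the $\dot g_{0,\zeta}$ name distinct Cohen subsets of $\kappa$, and the inductive step using the third clause of Definition \ref{definition:tower} together with the density of $\{r\mid(\gamma,\zeta),(\gamma,\zeta')\in t(r)\}$ from Lemma \ref{lemma:addtotarget}.

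The core of the argument — and the step I expect to be the main obstacle — is to produce a condition $q\le p$ and an ordinal $\beta<\lambda$ such that $q\Vdash_{\sS}\dot f(\check\alpha)=\dot g_{\alpha,\beta}$, with $(\alpha,\beta)\in\dom(q)$ and with the downward closure $D:=t(q,\{(\alpha,\beta)\})$ disjoint from $t(q,e)$. The difficulty is that one has no control over which index $\beta$ a generic filter assigns to $\dot f(\check\alpha)$, so the decided index need not be ``fresh''. The plan is: first extend $p$, using Lemma \ref{lemma:addtotarget}, so as to keep a supply of freshly added elements at level $\alpha$ (each with a two-element downward closure disjoint from $t(p)=t(p,e)$ and from $e$); then pass to a condition $r$ deciding $\dot f(\check\alpha)=\dot g_{\alpha,\beta}$; then, using that the pair $(\dot f,\dot g_{\alpha,\beta})^\bullet$ has support $\bar e:=e\cup\{(\alpha,\beta)\}$, restrict via the Restriction Lemma to $q:=r\restriction t(r,\bar e)$, so that $t(q)=t(q,\bar e)=t(q,e)\cup D$ and $D$ is self-contained (every value-function attached to an element of $D$ has range inside $D$, as $D$ is closed). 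The remaining work is to arrange, by a density argument — using Lemma \ref{lemma:addtotarget} again to refresh any ``dangling'' elements of $D$ whose attached functions point back into $t(q,e)$, and using crucially that $\alpha$ lies strictly above all coordinates of $e$ — that this can be done with $D\cap t(q,e)=\emptyset$.

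Granting such a $q$, pick $\tau\in\sG$ that fixes $t(q,e)$ pointwise and maps $D$ onto a fresh set $D'$, disjoint from $t(q)$, by an isomorphism of the attached tower structure; in particular $\beta':=\tau_\alpha(\beta)\ne\beta$. Since $D\cap e=\emptyset$ we have $\tau\in\fix(e)\le\sym(\dot f)$, and $\tau(\dot g_{\alpha,\beta})=\dot g_{\alpha,\beta'}$, so $\tau(q)\Vdash_{\sS}\dot f(\check\alpha)=\dot g_{\alpha,\beta'}$. By the argument establishing Claim \ref{claim:compatible1}, $q$ and $\tau(q)$ are compatible; the only point beyond that argument is the third clause of Definition \ref{definition:tower} for $q\cup\tau(q)$, which causes no problem at level $\alpha$ precisely because $\tau$ genuinely relocates the ranges of $D$'s value-functions into $D'$, disjoint from $t(q)$. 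Let $s\le q,\tau(q)$. Then $s\Vdash_{\sS}\dot g_{\alpha,\beta}=\dot f(\check\alpha)=\dot g_{\alpha,\beta'}$; but $(\alpha,\beta),(\alpha,\beta')\in t(s)$, and $s$ assigns them the distinct functions $q_{\alpha,\beta}$ (with range in $D$) and $\tau(q_{\alpha,\beta})$ (with range in $D'$), so $s\Vdash_{\sS}\dot g_{\alpha,\beta}\ne\dot g_{\alpha,\beta'}$ by the distinctness recorded above — a contradiction.

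For the second assertion, work in $V[G]_{\sS}$ and suppose $\DC_\lambda$ holds. The set $T$ of all functions $s$ with $\dom(s)$ an ordinal $<\lambda$ and $s(\xi)\in A_\xi$ for each $\xi\in\dom(s)$, ordered by end-extension, is a tree with no terminal nodes (every $A_\xi$ is nonempty, containing $g_{\xi,0}$) that is closed under increasing sequences of length $<\lambda$ by the regularity of $\lambda$; so $\DC_\lambda$ provides an increasing sequence through $T$ of length $\lambda$, the union of which is a choice function for $\langle A_\alpha\mid\alpha<\lambda\rangle$, contradicting the first assertion. Hence $\DC_\lambda$ fails in $V[G]_{\sS}$, and since $\AC$ trivially implies $\DC_\lambda$, $\AC$ fails there as well.
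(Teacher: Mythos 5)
Your overall strategy — fix a finite support $e$ for the name of the putative choice function, pick a level $\alpha$ strictly above everything in $e$, decide $\dot f(\check\alpha)=\dot g_{\alpha,\beta}$, then hit the deciding condition with a permutation fixing $e$ but moving $(\alpha,\beta)$, and derive a contradiction from compatibility — is exactly the strategy the paper uses. The second paragraph (deriving $\lnot\DC_\lambda$ from the absence of a choice function via the tree of partial choice functions, and $\lnot\AC$ from $\lnot\DC_\lambda$) is also correct and is more explicit than the paper, which leaves this standard step implicit.

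However, there is a genuine gap in the middle of your first paragraph, and it stems from imposing a requirement that you cannot obviously satisfy and that the argument does not actually need. You want to arrange that the whole downward closure $D=t(q,\{(\alpha,\beta)\})$ is disjoint from $t(q,e)$, and you propose to achieve this by \emph{first} stockpiling fresh elements at level $\alpha$ via Lemma~\ref{lemma:addtotarget} and \emph{then} ``refreshing dangling elements of $D$'' by a density argument. Neither step gives you what you want: stockpiling fresh $(\alpha,\beta_i)$'s does not control which index $\beta$ a stronger condition will assign to $\dot f(\check\alpha)$, and once a condition $q$ has $(\alpha,\beta)\in\dom(q)$ with $q_{\alpha,\beta}$ pointing (partly) into $t(q,e)$, you cannot ``refresh'' $q_{\alpha,\beta}$ — conditions are only ever extended, never modified, and changing $q_{\alpha,\beta}$ would change the meaning of $\dot g_{\alpha,\beta}$ that $q$ has just been chosen to decide. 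So the sentence ``the remaining work is to arrange\dots\ that this can be done with $D\cap t(q,e)=\emptyset$'' papers over a step that, as described, does not go through.

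Fortunately, the requirement is unnecessary. The paper only needs $(\alpha,\beta)\notin t(q,e)$, which is automatic: since $\alpha$ lies strictly above every first coordinate of $e$ and $t(q,e)\subseteq\alpha\times\lambda$, nothing at level $\alpha$ can be in $t(q,e)$. With that, pick $q\le p$ deciding $\dot F(\check\alpha)=\dot g_{\alpha,\beta}$ and extend (Lemma~\ref{lemma:addtotarget}) so that $(\alpha,\beta)\in t(q)$; then take $\pi\in\fix(e)$ fixing $t(q,e)$ pointwise and swapping $t(q)\setminus t(q,e)$ with a disjoint set. Then $\pi(\alpha,\beta)=(\alpha,\beta')$ with $\beta'\ne\beta$, so $\pi(q)\forces\dot F(\check\alpha)=\dot g_{\alpha,\beta'}$, and Claim~\ref{claim:compatible1} — applied exactly as stated, with no additional disjointness hypothesis on $D$ — already gives $q\parallel\pi(q)$, yielding the contradiction. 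In short: you should drop the attempt to make $D$ disjoint from $t(q,e)$ entirely, and let Claim~\ref{claim:compatible1} do the work it was designed to do.
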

\begin{proof}
Assume for a contradiction that $\dot F$ is an $\SSS$-name which is forced by some condition $p\in P$ to actually be such an choice function. Let $e\subseteq\lambda\times\lambda$ be finite such that $\fix(e)\le\sym(\dot F)$. Pick $\alpha<\lambda$ such that $\alpha>\max\dom(e)$. Pick $q\le p$ and $\beta<\lambda$ such that $q\forces\dot F(\check \alpha)=\dot g_{\alpha,\beta}$ and, using Lemma \ref{lemma:addtotarget}, $(\alpha, \beta) \in t(q)$. 
Pick a permutation $\pi=\langle\pi_\gamma\mid\gamma<\lambda\rangle\in\sG$ such that $\pi$ fixes $t(q,e)$ pointwise, and which swaps $t(q)\setminus t(q,e)$ with a set that is disjoint from $t(q)$. Such $\pi$ can easily be found, and since $(\alpha,\beta)\not\in t(q,e)$, $\pi(\alpha,\beta)=(\alpha,\beta')$ for some $\beta'\ne\beta$.
Then, $\forces\pi(\dot g_{\alpha,\beta})=\dot g_{\alpha,\beta'}\ne\dot g_{\alpha,\beta}$, and also $\pi(q)\forces\dot F(\check \alpha)=\dot g_{\alpha,\beta'}$. But this is a contradiction since $q\parallel\pi(q)$ by Claim \ref{claim:compatible1} -- note that we are in exactly the same situation as in that claim.
\end{proof}

\section{Minimal Supports}

In this section, we want to introduce a concept of minimal supports for $\sS$-names, and show that every $\sS$-name has such a minimal support.

\begin{definition}\label{definition:irreducible}
    Let $p\in P$. We say that a finite subset $a\subseteq t(p)$ is \emph{irreducible} (in $p$) if $t(p, b)\subsetneq t(p, a)$ whenever $b \subsetneq a$.
\end{definition}

\begin{lemma}\label{lemma:topintersection}
  If $\dot x$ and $\dot y$ are $\SSS$-names with finite supports $a,b\subseteq\lambda\times\lambda$ respectively, and $p\in P$ is such that $p\forces\dot x=\dot y$ and $a\cup b\subseteq t(p)$, then there is an irreducible $c\subseteq t(p,a)\cap t(p,b)$, and an $\SSS$-name $\dot z$ with $\fix(c)\le\sym(\dot z)$, such that $p\forces\dot z=\dot x$.
\end{lemma}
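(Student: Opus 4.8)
The plan is to use the Restriction Lemma together with Fact~\ref{fact:namebydef} to manufacture the name $\dot z$, and to use Lemma~\ref{lemma:finitehad} to locate an irreducible generating set $c$ inside the intersection $t(p,a)\cap t(p,b)$. First I would observe that, since $\dot x$ has support $a$ and $\dot y$ has support $b$, the Restriction Lemma applied to the formula $\anf{\dot x=\dot y}$ (in the parameters $\dot x,\dot y$, whose joint support is $a\cup b$) tells us that $p\restriction t(p,a\cup b)$ already forces $\dot x=\dot y$; in fact one wants the sharper statement that the piece of $p$ lying over $t(p,a)\cap t(p,b)$ already decides enough. The key point is that $\dot x$ is really determined, below $p$, by the information in $p$ that is ``relevant to $a$'', i.e.\ by $p\restriction t(p,a)$, and similarly $\dot y$ by $p\restriction t(p,b)$; since $p$ forces them equal, both must be determined by the overlap $p\restriction(t(p,a)\cap t(p,b))$. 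Concretely, I would run a separation/homogeneity argument: for any $q\le p$ deciding a statement about $\dot x$, one can move the part of $q$ outside $t(p,a)$ by an automorphism fixing $t(p,a)$, and symmetrically for $\dot y$ outside $t(p,b)$, to conclude that the decision only depended on $q\restriction(t(p,a)\cap t(p,b))$.

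Next, to get the name $\dot z$: let $e:=t(p,a)\cap t(p,b)$, which is a subset of $t(p)$, but a priori infinite, so we cannot directly take it as a support. Here Lemma~\ref{lemma:finitehad} (or rather its consequence that $t(p,a)\cap t(p,b)=\bigcap$ of finitely many $t(p,\{\gamma\})$-sets, intersected, is finitely generated) is what saves us: writing $a=\{\gamma_0,\dots,\gamma_m\}$ and $b=\{\delta_0,\dots,\delta_n\}$, we have $t(p,a)\cap t(p,b)=\bigcup_{i,j}\big(t(p,\{\gamma_i\})\cap t(p,\{\delta_j\})\big)$, and each term is finitely generated in the HAD tower $\bar p$; taking the union of finite generating sets yields a \emph{finite} set $c_0\subseteq t(p,a)\cap t(p,b)$ with $t(p,c_0)=t(p,a)\cap t(p,b)$. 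Then I would apply Fact~\ref{fact:namebydef} to the formula $\varphi(y,\dot x)\equiv\anf{y=\dot x}$ (for which $p$ trivially forces $\exists! y\,\varphi(y,\dot x)$) — but that only gives a name with support $\subseteq a$. Instead the right move is: $\dot x$ is forced equal to the unique object coded by the restricted generic $\dot G\restriction(\text{coordinates in }t(p,c_0))$ in the appropriate sense, and this is a definable notion with support $c_0$, so Fact~\ref{fact:namebydef} produces $\dot z$ with $\fix(c_0)\le\sym(\dot z)$ and $p\forces\dot z=\dot x$. Making ``$\dot x$ is definable from the restricted generic over $t(p,c_0)$'' precise, using the first step, is the crux.

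Finally, to upgrade $c_0$ to an irreducible set $c$: among all finite $c'\subseteq t(p,c_0)$ with $t(p,c')=t(p,c_0)$ (note $c_0$ itself is such a set, so this family is nonempty), pick one of minimal size, call it $c$. By minimality, for every proper subset $b\subsetneq c$ we must have $t(p,b)\subsetneq t(p,c)$ — otherwise $b$ would be a smaller generating set for $t(p,c)=t(p,c_0)$, contradicting minimality — so $c$ is irreducible in $p$. Since $t(p,c)=t(p,c_0)$, we still have $\fix(c)\le\fix(c_0)\le\sym(\dot z)$ — wait, the containment of fixgroups goes the other way, so let me instead note directly that $e\subseteq c_0$-names argument: what we actually need is $\fix(c)\le\sym(\dot z)$, and since $t(p,c)=t(p,c_0)$, any $\pi$ fixing $c$ pointwise maps $p$ in a way respecting $c_0$; more carefully, one re-runs the homogeneity argument of step one with $c$ in place of $c_0$ to see $\dot z$ (which is definable from the generic over $t(p,c_0)=t(p,c)$) has symmetry group containing $\fix(c)$. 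And $c\subseteq t(p,c_0)\subseteq t(p,a)\cap t(p,b)$ as required.

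I expect the main obstacle to be the first step: formalizing the claim that $\dot x$ is, below $p$, ``determined by the generic restricted to $t(p,a)$'' and hence, combined with $p\forces\dot x=\dot y$, determined by the generic over $t(p,a)\cap t(p,b)$. The clean way to do this is to prove the auxiliary statement that if $\dot x$ has support $a$, $a\subseteq t(p)$, and $q_0,q_1\le p$ agree on $t(p,a)$ (i.e.\ $q_0\restriction t(p,a)=q_1\restriction t(p,a)$, and their targets outside $t(p,a)$ are ``independent'' in the HAD sense), then $q_0$ and $q_1$ force the same things about $\dot x$ — which follows by the same automorphism-and-compatibility argument used in Claim~\ref{claim:compatible1} in the proof of the Restriction Lemma. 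Granting that, the rest is bookkeeping with Lemma~\ref{lemma:finitehad}.
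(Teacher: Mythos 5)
Your high-level plan is sound in two of its three pieces: Lemma~\ref{lemma:finitehad} does give a finite $c_0\subseteq t(p,a)\cap t(p,b)$ generating the whole intersection, and the shrinking-to-irreducible step is exactly the paper's (pick a generating set of minimal size, or shrink one element at a time). However, the central step --- actually producing the $\sS$-name $\dot z$ with $\fix(c)\le\sym(\dot z)$ and $p\forces\dot z=\dot x$ --- is left as a black box. You yourself flag it as ``the crux'' and then write ``granting that, the rest is bookkeeping,'' but that crux \emph{is} the lemma. Two concrete problems. First, the Fact~\ref{fact:namebydef} route needs a first-order formula $\varphi(y,\dot x_0,\dots,\dot x_n)$ with all parameter names symmetric under $\fix(c)$; you propose something like ``$y$ is the object coded by $\dot G$ restricted to $t(p,c_0)$,'' but you never write such a formula down, and there is no reason a priori that $\dot x^G$ should even lie in the submodel generated by those coordinates --- that is precisely what the lemma needs to establish, so invoking it here is circular. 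Second, the intuitive bridge from ``$\dot x$ is determined by $G$ over $t(p,a)$'' and ``$\dot y$ by $G$ over $t(p,b)$'' to ``the common value is determined by $G$ over $t(p,a)\cap t(p,b)$'' is a genuine mutual-genericity/product-decomposition claim. It is not a formal consequence of the Restriction Lemma; it requires constructing, for any two ``pasts'' agreeing on the overlap, an interpolating generic --- and establishing that this interpolation is available is roughly the same combinatorial work, involving the same automorphism-and-compatibility argument, as the paper's proof.

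The paper takes a different and more explicit route: it sets $\dot y'=\{(s,\tau):\exists(r,\tau)\in\dot y\ s\le r,p\}$ and \emph{directly constructs} $\dot z=\bigcup_{\pi\in\fix(c)}\pi(\dot y')$, which manifestly lies in $\HS$ with $\fix(c)\le\sym(\dot z)$. The entire difficulty is then pushed into verifying $p\forces\dot z=\dot x$, i.e.\ that each $\pi(\dot y')^G\subseteq\dot x^G$ for $\pi\in\fix(c)$; this is done with a second auxiliary permutation $\sigma\in\fix(t(p,a))$ (whose existence is a density argument of the Claim~\ref{claim:compatible1} flavour, much as you anticipate), a decomposition $\sigma[b]=\mathfrak a\cup\mathfrak b$, and crucially an appeal to Lemma~\ref{lemma:supportup} to convert $\sigma(\dot y)$ into a name with support $c\cup\mathfrak b$ that $\pi$ fixes. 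Note that Lemma~\ref{lemma:supportup} (via Lemma~\ref{lemma:define}) is exactly a rigorous ``$\dot x$ is definable from the restricted generic'' statement --- so your instinct points at the right tool --- but the paper applies it in a much more surgical way (to $\sigma(\dot y)$ with support $\sigma[b]\subseteq t(p\cup\sigma(p),c\cup\mathfrak b)$) rather than directly to $\dot x$ over $t(p,c)$, precisely because the direct application isn't available. In short: right ingredients at the boundary, but the construction of $\dot z$ and the verification that it works are missing, and filling them in is the content of the lemma.
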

\begin{proof}
  Consider $$\dot y' = \{ (s, \tau) : \exists (r, \tau) \in \dot y\  s \leq r,p \}.$$
 Clearly, $p \Vdash \dot y = \dot y'$. Using the HAD property (together with the assumption that $a$ and $b$ are both finite), let $c\subseteq t(p,a)\cap t(p,b)$ be finite such that $t(p,c)=t(p,a)\cap t(p,b)$. By possibly shrinking $c$ by one element finitely many times, we may additionally assume that $c$ is irreducible.
 
Now, simply consider $$\dot z = \bigcup_{\pi\, \in\, \fix(c)} \pi(\dot y').$$
 We obviously have $\dot z\in\HS$ and $\fix(c) \leq \sym(\dot z)$. We claim that indeed $p \Vdash \dot z = \dot x$. Toward this end, let $G$ be an arbitrary $P$-generic containing the condition $p$. We already know that $\dot x^G =  (\dot y')^G = \id(\dot y')^G  \subseteq \dot z^G$. Thus, it suffices to show that for any $\pi \in \fix(c)$, $\pi(\dot y')^G \subseteq \dot x^G$. 
 
 So let $\pi\in\fix(c)$. If $\pi(p) \notin G$, clearly $\pi(\dot y')^G = \emptyset$, as every condition appearing in a pair in $\pi(\dot y')$ is below $\pi(p)$. So assume that $\pi(p) \in G$. Let \[d = \{ \gamma\in\lambda\times\lambda\mid \pi(\gamma) \neq \gamma\}\cup t(p),\] which is of size less than $\lambda$. Pick $\sigma=\langle\sigma_\alpha\mid \alpha<\lambda\rangle\in\fix(t(p,a))$ so that $\sigma$ swaps the elements of $b\setminus t(p,a)$ with pairs of ordinals in $(\lambda\times\lambda)\setminus d$, and such that $\sigma(p) \in G$. This is possible:
 
\begin{claim}
  For any $q\le p$ there exists $\sigma\in\fix(t(p,a))$ that swaps the elements of $b\setminus t(p,a)$ with pairs of ordinals in $(\lambda\times\lambda)\setminus d$, and for which we have $q\parallel\sigma(p)$. Thus, by the genericity of $G$, there exists a desired $\sigma$ with $\sigma(p)\in G$.
\end{claim}
\begin{proof}
  Let $q\le p$, and let $e=d\cup t(q)$. Pick $\sigma=\langle\sigma_\alpha\mid\alpha<\lambda\rangle\in\sG$ fixing $t(p,a)$ pointwise, and which swaps $t(q)\setminus t(p,a)$ with a set that is disjoint from~$e$. Such $\sigma$ can easily be found.  Remember that $t(q,a)=t(p,a)$. Arguing exactly as in Claim~\ref{claim:compatible1} (with $\sigma$ in place of $\pi$, and with $a$ in place of $e$),  we obtain the stronger conclusion that $q\parallel\sigma(q)$. Now, this shows that for any $q\le p$ there is a permutation~$\sigma$ which is as desired, and we may thus pick $r\le q,\sigma(p)$. This yields a dense set of conditions~$r$, so we may pick one such $r\in G$. For the corresponding permutation~$\sigma$, it thus follows that $\sigma(p)\in G$, as desired.
\end{proof}
 
 Then, note that $\sigma(p) \Vdash \dot x = \sigma(\dot y')=\sigma(\dot y)$.
 Note also that $\pi(\sigma(p) \cup p)\in G$, since, by the properties of $\sigma$, it is weaker than $\pi(p)\cup\sigma(p)\in G$.
 Since $\fix(b)\le\sym(\dot y)$, it follows that $\fix(\sigma[b])\le\sym(\sigma(\dot y))$. Let's take a closer look at $\sigma[b]$. It can be written as a disjoint union of $\mathfrak a:=\sigma[b]\cap t(p,a)$ and of $\mathfrak b:=\sigma[b]\setminus t(p,a)$.

The set $\mathfrak a$ is pointwise fixed by $\sigma$, because $t(p,a)$ is, so in fact, $\mathfrak a=b\cap t(p,a)\subseteq t(p,a)\cap t(p,b)\subseteq t(p,c)$. The set $\mathfrak b$ is pointwise fixed by $\pi$, as follows easily from the definition of $\sigma$. That is, $\pi\in\fix(c\cup\mathfrak b)$. We also have \[\sigma[b]=\mathfrak a\cup\mathfrak b\subseteq t(p\cup\sigma(p),c\cup\mathfrak b)\] by the above. Thus, by Lemma~\ref{lemma:supportup}, there is a name $\dot y^*\in\HS$ with $\fix(c\cup\mathfrak b)\le\sym(\dot y^*)$ and such that $p\cup\sigma(p) \forces\dot y^*=\sigma(\dot y)$. 
This means that $\pi\in\sym(\dot y^*)$, and therefore, $\pi(\sigma(p) \cup p) \cup p \forces\pi(\dot x)=\dot y^*=\sigma(\dot y)$.
 Overall, since also $\pi(p)\forces\pi(\dot x)=\pi(\dot y')$, it follows in particular that $\dot x^G=\sigma(\dot y)^G=\pi(\dot x)^G=\pi(\dot y')^G$, as desired.
\end{proof}

\begin{definition}
  Let $p\in P$. We define a relation $\unlhd_p$ on the set of all irreducible subsets of $t(p)$, letting, for $a, b$ irreducible in $p$, $a \unlhd_p b$ if $t(p, a) \subseteq t(p, b)$.

  We define the \emph{strict} relation $\lhd$ by setting $a \lhd b$ if $a \unlhd b \wedge a \neq b$.
\end{definition}

We will usually omit the subscript $p$ when the relevant tower is clear from context. Note also that if $q\le p$ are complete towers and $a\unlhd_p b$, then also $a\unlhd_q b$, and also if $a\unlhd_q b$ and $b\subseteq t(p)$, then also $a\subseteq t(p)$, and $a\unlhd_p b$.

\begin{lemma}\label{lemma:wellfounded}
    Let $p$ be a complete tower. Then, $\unlhd=\unlhd_p$ is a well-founded partial order.
\end{lemma}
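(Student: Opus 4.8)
The plan is to expose the well-founded (tree-like) structure hidden inside a tower and to attach an ordinal rank to each irreducible set. First I would note that on $t(p)$ the relation $R$ given by $\gamma\mathrel{R}\gamma'$ iff $\gamma\in\dom p$ and $\gamma'\in\range p_\gamma$ is well-founded: if $\gamma=(\alpha,\beta)\in\dom p$ then $\range p_\gamma\subseteq\alpha\times\lambda$, so the first coordinate strictly decreases along $R$, ruling out infinite $R$-descending sequences. Hence $R$ carries a rank function $\rho\colon t(p)\to\Ord$, and by an easy induction along $R$ one gets $\rho(\gamma')<\rho(\gamma)$ whenever $\gamma'\in t(p,\{\gamma\})\setminus\{\gamma\}$. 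Throughout I will use the identity $t(p,e)=\bigcup_{\gamma\in e}t(p,\{\gamma\})$ from the proof of Lemma~\ref{lemma:finitehad}. Reflexivity and transitivity of $\unlhd$ are immediate from those of $\subseteq$.

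Next I would isolate the combinatorial content of irreducibility: if $a$ is irreducible in $p$, then $a$ is an $R$-antichain, i.e.\ there are no distinct $\gamma,\gamma'\in a$ with $\gamma'\in t(p,\{\gamma\})$. Indeed, such a pair would give $t(p,\{\gamma'\})\subseteq t(p,\{\gamma\})\subseteq t(p,a\setminus\{\gamma'\})$, hence $t(p,a\setminus\{\gamma'\})=t(p,a)$, contradicting irreducibility.

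The core of the argument is the rank assignment $r(a)=\bigoplus_{\gamma\in a}\omega^{\rho(\gamma)}$ (the natural, i.e.\ Hessenberg, sum of these finitely many ordinals), defined for $a$ irreducible. I would show that $a\lhd b$ implies $r(a)<r(b)$. Given $a\lhd b$, every $\gamma\in a$ lies in $t(p,a)\subseteq t(p,b)=\bigcup_{\delta\in b}t(p,\{\delta\})$, so we may fix a function $f\colon a\to b$ with $\gamma\in t(p,\{f(\gamma)\})$ and with $f(\gamma)=\gamma$ whenever $\gamma\in a\cap b$. The antichain property of $a$ forces $f^{-1}(\delta)=\{\delta\}$ for $\delta\in a\cap b$, while for $\delta\in b\setminus a$ the (finite) fiber $f^{-1}(\delta)$ consists of ordinals $\gamma$ with $\rho(\gamma)<\rho(\delta)$, so $\bigoplus_{\gamma\in f^{-1}(\delta)}\omega^{\rho(\gamma)}<\omega^{\rho(\delta)}$ (a finite natural sum of powers $\omega^{\xi}$ with all $\xi<\rho(\delta)$ stays below $\omega^{\rho(\delta)}$). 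Since $a=\bigsqcup_{\delta\in b}f^{-1}(\delta)$, monotonicity of the natural sum gives $r(a)\le r(b)$; and equality would force every $\delta\in b$ to satisfy $\delta\in a\cap b$ with $f^{-1}(\delta)=\{\delta\}$, whence $b\subseteq a$ and $a=\bigsqcup_{\delta\in b}\{\delta\}=b$, contrary to $a\ne b$. Thus $r(a)<r(b)$.

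Finally, antisymmetry is then immediate: if $a\unlhd b$ and $b\unlhd a$ but $a\ne b$, then $a\lhd b$ and $b\lhd a$, giving $r(a)<r(b)<r(a)$; so $\unlhd$ is a partial order. Well-foundedness follows as well, since any sequence $\langle a_i\mid i<\omega\rangle$ with $a_{i+1}\lhd a_i$ for all $i$ would yield $r(a_0)>r(a_1)>\cdots$, an impossible infinite descending sequence of ordinals. I expect the main obstacle to be the bookkeeping in the third paragraph: recognising that irreducibility is exactly the antichain hypothesis needed to pin down the fibers over $a\cap b$, and that one must compare via natural sums of $\omega^{\rho(\gamma)}$ — rather than, say, the supremum of the ranks occurring, which is not strictly monotone under $\lhd$ — in order to get a strict decrease.
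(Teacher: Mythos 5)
Your argument is correct and proves both claims by the same general strategy as the paper --- exhibit an ordinal rank on irreducible sets that strictly decreases along $\lhd$ --- but the details differ in instructive ways. The paper proves antisymmetry first via a direct ``largest level $\alpha$ where $a$ and $b$ differ'' argument, and then proves well-foundedness using $\delta(a)=\sum_{\alpha\in\dom a}\omega^\alpha\cdot|a_\alpha|$ (which, summed in decreasing order of $\alpha$, is the Hessenberg sum $\bigoplus_{(\alpha,\beta)\in a}\omega^\alpha$), with strict monotonicity shown by a second largest-level argument of the same flavour. You instead isolate the observation, used only implicitly in the paper, that irreducibility is exactly the antichain condition for the closure relation $R$; replace the first-coordinate exponent by the $R$-rank $\rho(\gamma)$; and prove strict monotonicity in one shot by partitioning $a$ into fibers over $b$, so that both antisymmetry and well-foundedness drop out of the single inequality $r(a)<r(b)$. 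The fiber decomposition is a pleasant alternative to the paper's level-by-level comparison, and deriving antisymmetry as a corollary is a genuine streamlining. Two small remarks: the coarser exponent (the first coordinate of $\gamma$) works verbatim in your fiber argument, since all you need is that the exponent strictly decreases along $R$, so the passage through $\rho$ is optional; and strictly speaking it is the converse relation that is well-founded in the usual sense --- your argument shows that no infinite forward $R$-chain exists, which is what licenses the recursion defining $\rho$ --- a terminological rather than a mathematical slip.
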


\begin{proof}
    Clearly, $\unlhd$ is transitive and reflexive. In order to check antisymmetry, suppose for a contradiction that $t(p, a) = t(p, b)$ but $a \neq b$. Let $\alpha$ be largest so that $a_\alpha \neq b_\alpha$, where $a_\alpha := \{\beta\mid(\alpha, \beta) \in a \}$, and similarly for $b$. Say, without loss of generality, that $\beta \in a_\alpha \setminus b_\alpha$. As $(\alpha, \beta) \in t(p, a) = t(p,b)$, there must be some $\bar \alpha > \alpha$ and $\bar \beta < \kappa$ with $(\bar \alpha, \bar \beta) \in b$ and $(\alpha, \beta) \in t(p, \{(\bar \alpha, \bar \beta)\})$. But then $(\bar \alpha, \bar \beta) \in a$ as well, as $\alpha$ was chosen largest with $a_\alpha \neq b_\beta$. We obtain that $t(p, a) = t(p, a \setminus \{(\alpha, \beta)\})$, so $a$ is not irreducible, which is a contradiction.
    
    To check well-foundedness, for an irreducible $a \subseteq t(p)$, let $$\delta(a) := \sum_{\alpha \in \dom a} \omega^\alpha \cdot \vert a_\alpha \vert,$$ using ordinal arithmetic. It suffices to note that $a \lhd b$ implies $\delta(a) < \delta(b)$. Towards this end, again, let $\alpha$ be largest so that $a_\alpha \neq b_\alpha$. We claim that $a_\alpha \subseteq b_\alpha$. In particular then, $a_\alpha$ must be a strict subset of $b_\alpha$ and we obtain that $\delta(a) < \delta(b)$. So suppose otherwise, that there is $\beta \in a_\alpha \setminus b_\alpha$. Just as before, we obtain that $a$ is not irreducible, using that $t(p, a) \subseteq t(p, b)$, which is again a contradiction.
\end{proof}

\begin{theorem}[Minimal Supports]\label{theorem:minsup}
  If $\dot x$ is an $\SSS$-name and $p\in P$, then there is $q\le p$, a unique (with respect to $q$) irreducible (in $q$) $b\subseteq t(q)$, and $\dot y\in\HS$ with support $b$ for which $q\forces\dot y=\dot x$, and whenever $a\lhd b$ and $\dot z$ is an $\SSS$-name with support~$a$, then $q\forces\dot z\ne\dot x$. We say that $b$ is the \emph{minimal support for $\dot x$ below $q$} in this case.
\end{theorem}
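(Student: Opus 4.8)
The plan is to work with what I will call \emph{valid pairs}: pairs $(q,a)$ such that $q\le p$, $a$ is a finite irreducible subset of $t(q)$, and there is an $\SSS$-name $\dot y$ with support $a$ with $q\forces\dot y=\dot x$. First I would produce one such pair. Fix a finite support $e\in[\lambda\times\lambda]^{<\omega}$ of $\dot x$. By finitely many applications of Lemma~\ref{lemma:addtotarget} we may extend $p$ (to a condition we keep calling $p$) so that $e\subseteq t(p)$; shrinking $e$ one element at a time we obtain an irreducible $a_0\subseteq e$ with $t(p,a_0)=t(p,e)\supseteq e$, and then Lemma~\ref{lemma:supportup} provides an $\SSS$-name $\dot y_0$ of support $a_0$ with $p\forces\dot y_0=\dot x$. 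Hence valid pairs exist.

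The key device is the ordinal \[\delta(a)=\sum_{\alpha\in\dom a}\omega^\alpha\cdot\bigl|\{\beta:(\alpha,\beta)\in a\}\bigr|\] (ordinal arithmetic) from the proof of Lemma~\ref{lemma:wellfounded}: crucially, $\delta(a)$ depends only on $a$ and not on any ambient tower, and $a'\lhd_r a$ implies $\delta(a')<\delta(a)$ in any complete tower $r$ (so $a'\unlhd_r a$ implies $\delta(a')\le\delta(a)$). Among all valid pairs, I would choose $(q,b)$, with a witnessing name $\dot y$, so that $\delta(b)$ is least; this makes sense because $\{\delta(a):(q,a)\text{ is a valid pair}\}$ is a nonempty set of ordinals. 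By construction $q\le p$, $b$ is irreducible in $q$, and $q\forces\dot y=\dot x$, so only the minimality clause and uniqueness remain.

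For minimality, assume toward a contradiction that $a\lhd_q b$, that $\dot z$ has support $a$, and that $q$ does not force $\dot z\neq\dot x$; pick $q'\le q$ with $q'\forces\dot z=\dot x$, so $q'\forces\dot z=\dot y$. Since $a\cup b\subseteq t(q)\subseteq t(q')$, Lemma~\ref{lemma:topintersection} applied with $q'$ as the forcing condition yields an irreducible $c\subseteq t(q',a)\cap t(q',b)$ and an $\SSS$-name $\dot w$ of support $c$ with $q'\forces\dot w=\dot x$; thus $(q',c)$ is again a valid pair. Now, since $\bar q$ is a complete tower, $q_\gamma=q'_\gamma$ for every $\gamma\in t(q)$ with positive first coordinate, so an easy induction gives $t(q',a')=t(q,a')$ for every finite $a'\subseteq t(q)$. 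In particular $t(q',a)=t(q,a)$ and $t(q',b)=t(q,b)$, and since $a\lhd_q b$ gives $t(q,a)\subsetneq t(q,b)$ we conclude $c\subseteq t(q,a)=t(q',a)$; as targets are closed under the generating operation, this forces $c\unlhd_{q'}a$, whence $\delta(c)\le\delta(a)<\delta(b)$, contradicting the minimality of $\delta(b)$. I expect this to be the main obstacle: one has to be certain that passing from $q$ to $q'$ does not disturb the targets of finite subsets of $t(q)$, so that a strict $\lhd$-descent really produces a drop of the condition-independent measure $\delta$.

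For uniqueness, suppose $b'$ is any irreducible subset of $t(q)$ admitting a name $\dot y'$ of support $b'$ with $q\forces\dot y'=\dot x$ and satisfying the same minimality clause. Applying Lemma~\ref{lemma:topintersection} to $\dot y$ and $\dot y'$ below $q$ produces an irreducible $c\subseteq t(q,b)\cap t(q,b')$ carrying a name forced equal to $\dot x$ by $q$; since $c\unlhd_q b$ and $c\unlhd_q b'$, the minimality clause for $b$ rules out $c\lhd_q b$ and that for $b'$ rules out $c\lhd_q b'$, so $c=b=b'$ by the antisymmetry of $\unlhd_q$ (Lemma~\ref{lemma:wellfounded}).
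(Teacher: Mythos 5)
Your proposal is correct and rests on exactly the same ingredients as the paper's proof: Lemma~\ref{lemma:topintersection} supplies the descent and the ordinal $\delta$ from Lemma~\ref{lemma:wellfounded} guarantees termination. The only presentational difference is that you select a $\delta$-minimal valid pair and derive a contradiction, whereas the paper iterates Lemma~\ref{lemma:topintersection} along a $\lhd$-descending chain until it stabilises; your version has the minor virtue of making fully explicit that $\delta$ is independent of the ambient condition and that targets of finite subsets of $t(q)$ are unchanged when passing to $q'\le q$, a point the paper only addresses in the remark preceding Lemma~\ref{lemma:wellfounded}.
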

\begin{proof}
  Use Lemma \ref{lemma:topintersection} repeatedly, in order to obtain successively stronger conditions $q_i\le p$, $\sS$-names $\dot y_i$ and successively smaller (according to $\lhd$) irreducible $b_i$, such that for each $i$, $q_i\forces\dot y_i=\dot x$ and $\fix(b_i)\le\sym(\dot y_i)$. By Lemma \ref{lemma:wellfounded}, this construction has to break down after a final finite stage $i$. Then clearly, $q_i$, $b_i$ and $\dot y_i$ are as desired, where the uniqueness of $b_i$ follows from the fact that $\unlhd$ is a partial order, that is if some irreducible $b$ satisfies $b\unlhd b_i$ and $b_i\unlhd b$, then already $b=b_i$.
\end{proof}

Note that if $b$ is the minimal support for an $\sS$-name $\dot x$ below a condition $q\in P$ and $r\le q$, then $b$ is also the minimal support for $\dot x$ below $r$. Moreover, if $\pi\in\sG$, then $\pi[b]$ is the minimal support for $\pi(\dot x)$ below $\pi(q)$.

\section{The Ordering Principle}

We now want to show that the ordering principle holds in our symmetric extension. The arguments in this section will be very similar to the corresponding arguments presented in \cite{hs}.

\begin{lemma}\label{lem:Alin}
  There is an $\sS$-name $\dot <$ for a linear order of $\dot A$, such that $\sym(\dot <) = \sG$.
\end{lemma}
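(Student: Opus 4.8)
The plan is to linearly order each set $A_\alpha$ in a uniformly definable, fully symmetric way, and then to order $\dot A=\bigcup_{\alpha<\lambda}\dot A_\alpha$ by comparing two elements first according to the least level at which each of them appears, and, within a single level $\alpha$, according to the order already chosen on $A_\alpha$. I will use the following facts about the construction of Section~\ref{section:forcing}: $\dot A=\bigcup_{\alpha<\lambda}\dot A_\alpha$, so every $x\in A$ has a well-defined least level $\mathrm{lev}(x)=\min\{\alpha<\lambda\mid x\in A_\alpha\}$; every element of $A_0$ is a subset of $\kappa$; for $\alpha>0$ the name $\dot g_{\alpha,\beta}$ is forced to be a function from an ordinal all of whose values lie in $A_{<\alpha}$, so every element of $A_\alpha$ is, as an object, a function from an ordinal into $A_{<\alpha}$; and each $\dot A_\alpha$, as well as the sequence $\langle\dot A_\alpha\mid\alpha<\lambda\rangle^\bullet$, is symmetric with symmetry group $\sG$ (the latter because every $\pi\in\sG$ fixes each $\dot A_\alpha$ and acts trivially on the indexing).

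Two auxiliary operations will be used, each turning given data into a linear order in a parameter-free definable way. Given a linearly ordered set $(L,\prec)$, let $\mathrm{lex}(L,\prec)$ denote the lexicographic order on functions from ordinals into $L$: two distinct such functions are compared at the least ordinal common to their domains where they disagree, and, if there is no such ordinal, the one with the smaller domain is the smaller. Given a sequence $\langle(L_\alpha,\prec_\alpha)\mid\alpha<\gamma\rangle$ of linearly ordered sets indexed by an ordinal $\gamma$, let $\mathrm{comb}\langle(L_\alpha,\prec_\alpha)\mid\alpha<\gamma\rangle$ be the order on $\bigcup_{\alpha<\gamma}L_\alpha$ that, for distinct $x,y$, puts $x$ below $y$ when $\min\{\alpha\mid x\in L_\alpha\}<\min\{\alpha\mid y\in L_\alpha\}$, reverses this when the inequality is reversed, and, when these minima agree and equal some $\alpha$, compares $x$ and $y$ by $\prec_\alpha$. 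It is routine that $\mathrm{lex}(L,\prec)$ restricts to a linear order on any set of functions from ordinals into $L$, and that $\mathrm{comb}$ of a sequence of linear orders is a linear order of the union of the underlying sets; in both cases the output is the unique object satisfying an explicit formula in the input, so the uniqueness hypothesis of Fact~\ref{fact:namebydef} will be available.

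I now recurse on $\alpha<\lambda$, producing $\sS$-names $\dot <_\alpha$ with $\sym(\dot <_\alpha)=\sG$ that are forced to be linear orders of $\dot A_\alpha$. For $\alpha=0$, let $<_0$ be the restriction to $A_0$ of the linear order of $\POT\kappa$ given by $x<y\iff\min(x\triangle y)\in y$; this is definable without further parameters, so Fact~\ref{fact:namebydef}, applied with the symmetric parameter $\dot A_0$, yields the desired $\dot <_0$. For $\alpha>0$, assuming $\dot <_{\bar\alpha}$ has been constructed for all $\bar\alpha<\alpha$, let $<_\alpha$ be the restriction to $A_\alpha$ of $\mathrm{lex}\big(A_{<\alpha},\ \mathrm{comb}\langle(A_{\bar\alpha},<_{\bar\alpha})\mid\bar\alpha<\alpha\rangle\big)$; this is a linear order of $A_\alpha$ precisely because each element of $A_\alpha$ is a function from an ordinal into $A_{<\alpha}$. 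Now $<_\alpha$ is the unique object described by an explicit formula from the parameters $\dot A_\alpha$, $\langle\dot A_{\bar\alpha}\mid\bar\alpha<\alpha\rangle^\bullet$ and $\langle\dot <_{\bar\alpha}\mid\bar\alpha<\alpha\rangle^\bullet$, all of which are symmetric with symmetry group $\sG$ (the last by the inductive hypothesis, again because every $\pi\in\sG$ fixes each $\dot <_{\bar\alpha}$ and acts trivially on the indexing), so Fact~\ref{fact:namebydef} delivers $\dot <_\alpha$ with $\sym(\dot <_\alpha)=\sG$, as needed. Finally, I obtain $\dot <$ from Fact~\ref{fact:namebydef} applied to a formula uniquely describing $\mathrm{comb}\langle(A_\alpha,<_\alpha)\mid\alpha<\lambda\rangle$, using the symmetric parameters $\langle\dot A_\alpha\mid\alpha<\lambda\rangle^\bullet$ and $\langle\dot <_\alpha\mid\alpha<\lambda\rangle^\bullet$; then $\dot <$ is forced to be a linear order of $\dot A$ and $\sym(\dot <)\supseteq\sG$, hence $\sym(\dot <)=\sG$.

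The one point that genuinely needs care is the claim that, for $\alpha>0$, the elements of $A_\alpha$ are \emph{intrinsically} functions from ordinals into $A_{<\alpha}$ rather than merely being indexed by pairs; this is read directly off the definition of $\dot g_{\alpha,\beta}$. In particular one never needs distinct index pairs to give distinct elements $g_{\alpha,\beta}\neq g_{\alpha,\beta'}$: if two of these coincide they are simply the same element of the set one is ordering, which does no harm to any lexicographic comparison. Everything else — that $\mathrm{lex}$ and $\mathrm{comb}$ genuinely return linear orders (so the uniqueness hypothesis of Fact~\ref{fact:namebydef} is met below the weakest condition), and that the listed parameter sequences have symmetry group exactly $\sG$ — is routine.
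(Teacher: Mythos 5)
Your proof is correct and rests on the same underlying idea as the paper's: linearly order $A$ lexicographically, organised by the least level of the hierarchy $\langle A_\alpha\rangle$ at which an element appears, with level $0$ handled by the lex order on subsets of $\kappa$ and higher levels by a lex order on functions into the union of lower levels. Where you differ is in implementation. The paper defines, purely in the ground model, a fixed ``template'' hierarchy $\langle X_\alpha\rangle$ with a parameter-free definable linear order $<_\lambda$ on $X_\lambda$, observes that $\dot A$ is forced to be contained in $X_\lambda$, and applies Fact~\ref{fact:namebydef} once to restrict $<_\lambda$ to $\dot A$. You instead run a transfinite recursion \emph{on $\sS$-names}, invoking Fact~\ref{fact:namebydef} at each stage $\alpha<\lambda$ to produce $\dot <_\alpha$ from the symmetric parameters $\dot A_\alpha$, $\langle\dot A_{\bar\alpha}\mid\bar\alpha<\alpha\rangle^\bullet$ and $\langle\dot<_{\bar\alpha}\mid\bar\alpha<\alpha\rangle^\bullet$, then combine at the end. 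Both routes work; the paper's is lighter because it needs only a single application of Fact~\ref{fact:namebydef} and no recursion at the level of names, while yours avoids committing to any particular ambient template set. One small point in your favour: your $\mathrm{lex}$ operation is explicitly defined on functions with arbitrary ordinal domains, which is what is actually needed here since $g_{\alpha,\beta}$ for $\alpha>0$ has some cardinal domain $\delta<\lambda$, not domain $\omega$; the paper's template step is written as $X_{\alpha+1}={}^\omega X_\alpha$, which looks like a slip and should be something like $\bigcup_{\delta<\lambda}{}^\delta X_\alpha$ for the containment $\dot A\subseteq X_\lambda$ to hold. Your remark that nothing hinges on distinct index pairs giving distinct elements is also the right thing to check.
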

\begin{proof} In any model of $\ZF$, we can consider the definable sequence of sets $\langle X_\alpha : \alpha \in \Ord\rangle$, obtained recursively by setting $X_0 = {}^{\kappa}2$, $X_{\alpha + 1} = {}^\omega X_\alpha$ and $X_\alpha = \bigcup_{\beta < \alpha} X_\beta$ for limit $\alpha$. We can recursively define linear orders $<_\alpha$ on $X_\alpha$, by letting $<_0$ be the lexicographic ordering on ${}^{\kappa}2$, $<_{\alpha+1}$ be the lexicographic ordering on $X_{\alpha +1}$ obtained from $<_\alpha$, and for limit $\alpha$, $x <_\alpha y$ iff, for $\beta$ least such that $x \in X_\beta$, either $y \notin X_\gamma$ for all $\gamma \leq \beta$, or $y \in X_\beta$ and $x <_\beta y$. Then $<_{\lambda}$ is a definable linear order of $X_{\lambda}$. Note that $\dot A$ is forced to be contained in $X_{\lambda}$, and by Fact~\ref{fact:namebydef}, there is an $\sS$-name $\dot <$ as required. 
\end{proof}

\begin{theorem}\label{theorem:classfunction}
  There is a class $\sS$-name $\dot F$ for an injection of the symmetric extension by $\sS$ into $\Ord \times \dot A^{<\omega}$ such that $\sym(\dot F) = \sG$. In particular, $\OP$ holds in our symmetric extension.
\end{theorem}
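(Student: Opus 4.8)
The plan is to define a class function $F$ sending each $x$ in the symmetric extension $\VV[G]_{\sS}$ to a pair $(\delta_x,\vec a_x)\in\Ord\times \dot A^{<\omega}$, where the tuple $\vec a_x$ encodes the minimal support of $x$ (converted from coordinates to the actual generators, and listed in $\dot <$-increasing order) and $\delta_x$ is an ordinal that pins $x$ down once this support is known. First I would fix, for each $x$, a name $\dot x\in\HS$ and, using Theorem~\ref{theorem:minsup} and genericity, a condition $q\in G$ below which $\dot x$ has a minimal support $b$. The key preliminary point is that $b$ does not depend on the choices: given two such names, go below a common $r\in G$ that also forces them equal, recall that minimal supports persist downwards, and apply Lemma~\ref{lemma:topintersection} in both directions to obtain $b_1\unlhd_r b_2$ and $b_2\unlhd_r b_1$; by the antisymmetry established in Lemma~\ref{lemma:wellfounded} we get $b_1=b_2$. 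Writing $b_x$ for this set and using that distinct coordinates carry distinct generators, let $\vec a_x$ be the $\dot <$-increasing enumeration of $\{g_\gamma\mid\gamma\in b_x\}$ (Lemma~\ref{lem:Alin}), which recovers $b_x$.

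For the ordinal component, I would use the Restriction Lemma~\ref{lem:restriction}: the value $x=\dot x^G$ depends only on the part of $G$ relevant to $t(q,b_x)$. Since the tower functions on $t(q,b_x)$ are fixed as soon as $q\in G$, this ``local generic'' is in fact recoverable inside $\VV[G]_{\sS}$ from the generators attached to $t(q,b_x)$ together with the map structure they already carry (each $g_{\alpha,\beta}$ with $\alpha>0$ being a map into earlier generators). Hence $x$ is definable in $\VV[G]_{\sS}$ from $\vec a_x$, a fixed tuple of fully symmetric parameters (namely $\dot A^G$, $\langle\dot A_\alpha\mid\alpha<\lambda\rangle^G$, the order $\dot <$, and the canonical sequence $\langle\HS_\gamma^\bullet\mid\gamma\in\Ord\rangle^G$), and an ordinal ``recipe code''; let $\delta_x$ be the least such code and set $F(x)=(\delta_x,\vec a_x)$. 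Then $F$ is injective: from $\vec a_x$ one reads off $b_x$ and the local generic, and from $\delta_x$ the recipe producing $x$ from these.

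To see $\sym(\dot F)=\sG$, note that $\dot <$ is symmetric and, as recorded after Theorem~\ref{theorem:minsup}, $\pi[b_x]$ is the minimal support of $\pi(\dot x)$, so $b_x\mapsto\vec a_x$ is $\sG$-equivariant; and $\delta_x$, being defined purely from the abstract order type of the support together with fully symmetric parameters, is $\sG$-invariant. Thus applying $\pi\in\sG$ to a triple defining $\dot F$ yields another such triple with the same ordinal coordinate, whence $\pi(\dot F)=\dot F$; equivalently, one may invoke a class version of Fact~\ref{fact:namebydef}, since $F$ is definable from fully symmetric parameters. Finally, $\Ord\times\dot A^{<\omega}$ carries a definable linear order (compare ordinals, then lengths, then lexicographically via $\dot <$), so $F$ exhibits every set as injectable into a linearly ordered set, and therefore $\OP$ holds.

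The step I expect to be the main obstacle is arranging that $\delta_x$ is simultaneously well-defined (depending only on $x$, so that $\dot F$ is a function) and $\sG$-invariant (so that $\dot F$ is symmetric): the recipe extracting $x$ from its local generic must be canonical in $x$ alone, not tied to a particular name and not referring to the literal coordinates of $b_x$, which $\sG$ moves around. This is exactly why the support is fed into $F(x)$ as the $\dot <$-sorted tuple $\vec a_x$ rather than as the raw set of coordinates: the recipe may then mention coordinates only through their $\dot <$-rank, so that conjugating by a permutation and re-sorting cancel each other out. The well-definedness of the minimal support (Lemmas~\ref{lemma:topintersection} and~\ref{lemma:wellfounded}) and the Restriction Lemma do the remaining work.
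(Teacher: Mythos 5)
Your proposal has the same conceptual skeleton as the paper's proof — minimal supports via Theorem~\ref{theorem:minsup}, the $\dot<$-sorted tuple $\vec a_x$ of generators as the $\dot A^{<\omega}$ component, and an ordinal to encode the rest — and your argument that $b_x$ is independent of the choice of name and condition (via Lemma~\ref{lemma:topintersection} in both directions plus the antisymmetry from Lemma~\ref{lemma:wellfounded}) is correct, although the paper sidesteps this by simply taking a $\prec$-minimal witness. However, there is a genuine gap at precisely the step you flag as the ``main obstacle'': the existence and canonicity of the ``recipe code'' $\delta_x$.

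The issue is that the Restriction Lemma~\ref{lem:restriction} is a statement about forcing conditions, not about evaluations; it does not by itself yield that $x=\dot x^G$ is recoverable in $V[G]_\sS$ from $\vec a_x$ together with fully symmetric parameters. To turn ``$x$ depends only on the generators at $t(q,b_x)$'' into an actual definition, the paper introduces the symmetric $P$-name $\dot\Gamma=\{\pi(\dot G):\pi\in\sG\}^\bullet$ and uses it as a parameter: given the generators indexed by a support, one ranges over the filters $H\in\Gamma$ agreeing with $G$ on those generators and evaluates a fixed name $\dot z$. (This is spelled out in Lemma~\ref{lemma:define}, which you may not have seen, but the mechanism is already present in the proof of Theorem~\ref{theorem:classfunction} via the Claim about $H,K\in\Gamma$.) Without $\Gamma$ or an equivalent device, there is no canonical way to pass from $\vec a_x$ back to $x$, and none of the parameters you list ($\dot A$, $\langle\dot A_\alpha\rangle$, $\dot<$, $\langle\HS_\gamma^\bullet\rangle$) supplies it. A second, related problem is your framing of $\delta_x$ as ``$\sG$-invariant'': there is no natural $\sG$-action on $V[G]_\sS$, so this is not the right shape of statement. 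What is actually required is $\pi(\dot F)=\dot F$ for all $\pi\in\sG$, and the paper obtains this not by making the ordinal component invariant but by defining $F(x)=(\xi,t)$, where $\xi$ is the $\prec$-rank of a ground-model tuple $(p,\dot z,a,h)$ ($a$ the minimal support for $\dot z$ below $p$, $h$ an enumeration of $a$ that $p$ forces to respect $\dot<$, and some $H\in\Gamma$ with $p\in H$ and $\dot z^H=x$) and $t=(\dot t_h)^H$; well-definedness and injectivity both come out of the Claim, and $\sym(\dot F)=\sG$ then follows simply because the defining formula mentions only $\dot\Gamma$, $\dot<$ and check-names. In short: your outline is the right one, but it lacks the parameter $\dot\Gamma$ that makes the recipe both exist and be phrased in symmetric terms, and the invariance argument should target $\dot F$ itself, not the ordinal coordinate.
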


  \begin{proof}
    Fix a global well-order $\prec$ of our ground model $V$, and let $G$ be $P$-generic over~$V$. We first provide a definition of such an injection $F$ in the full $P$-generic extension $V[G]$. Then, we will observe that all the parameters in this definition have symmetric names, which will let us directly build an $\sS$-name $\dot F$ for $F$.
    
    For each $a \in [\lambda\times\lambda]^{<\omega}$ and each enumeration $h = \langle \gamma_i : i < k \rangle$ of~$a$, define $\dot G_a = \{ \dot g_\gamma\mid\gamma\in a \}^\bullet$
    and $\dot t_h = \langle \dot g_{\gamma_i} : i < k \rangle^\bullet$. Define $\dot \Gamma = \{\pi(\dot G) : \pi \in \sG \}^\bullet$. While $\dot \Gamma$ is not an $\sS$-name in general, it is still a symmetric $P$-name. Let $\Gamma = \dot \Gamma^G$ and $< = \dot <^G$. Given $x \in V[G]_\sS$, $F(x)$ will be found as follows: 
    
     First, let $(p, \dot z, a, h)$ be $\prec$-minimal with the following properties: 
     
     \begin{enumerate}
         \item in $V$, $a$ is the minimal support for $\dot z$ below $p$,
         \item in $V$, $h$ is an enumeration of $a$ so that $p$ forces that $\dot t_h$ enumerates $\dot G_a$ in the order of $\dot <$,
         \item in $V[G]$, there is $H \in \Gamma$ with $p \in H$ and $\dot z^H = x$.
     \end{enumerate} Such a tuple certainly exists by Theorem~\ref{theorem:minsup} and since $G \in \Gamma$. 
     
     \begin{claim}
     For any $H, K \in \Gamma$ with $p \in H, K$, the following are equivalent:
    \begin{enumerate}[label=(\alph*)]
        \item $(\dot t_h)^H = (\dot t_h)^K$,
        \item $\dot z^H = \dot z^K$.
    \end{enumerate}
    \end{claim}
    \begin{proof}
    Let $H, K \in \Gamma$, $p \in H, K$. $H$ is itself a $P$-generic filter, and $\dot \Gamma^H = \dot \Gamma^G = \Gamma$, as can be easily checked. Thus, there is $\pi \in \sG$ so that $K = \pi(\dot G)^H$. Now, note that $\pi(\dot G)^H = \pi^{-1}[H]$ and $(\dot t_h)^K = (\dot t_h)^{\pi^{-1}[H]} = \pi(\dot t_h)^H$. Similarly, $\dot z^K = \pi(\dot z)^H$.

    \medskip

    Suppose that $(\dot t_h)^H = (\dot t_h)^K$. Then, $(\dot t_h)^H = \pi(\dot t_h)^H$. By the way that permutations act on the names $\dot g_\gamma$ (see Section \ref{section:symmetricsystem}), and thus on $\dot t_h$, the only way this is possible is if $\pi(\gamma)=\gamma$ for every $\gamma\in a$. In other words, $\pi \in \fix(a)$. Thus, $\dot z^H = \pi(\dot z)^H = \dot z^K$.

    \medskip    

    Now, suppose that $\dot z^H = \dot z^K = \pi(\dot z)^H$. Since $p\in K=\pi^{-1}[H]$, it follows that $\pi(p)\in H$. Thus, there is $r\le p,\pi(p)$ in $H$ with $r\forces\dot z=\pi(\dot z)$. Since $a$ is the minimal support for $\dot z$ below $p$, and hence also below $r$, also $\pi[a]$ is the minimal support for $\pi(\dot z)$ below $\pi(p)$, hence also below $r$. But by the uniqueness property in Theorem~\ref{theorem:minsup}, this implies that $\pi[a]=a$. This also means that $\dot G_a = \pi(\dot G_a)$. As $p$ forces that $\dot t_h$ is the $\dot <$-enumeration of $\dot G_a$, $\pi(p)$ forces that $\pi( \dot t_h)$ is the $\pi(\dot <)$-enumeration of $\pi(\dot G_a)$. Since $p \in K$ and $\pi(p) \in H$, this implies that $(\dot t_h)^K = \pi( \dot t_h)^H$ is the enumeration of $\pi(\dot G_a)^H = \dot G_a^H$ according to $\pi(\dot <)^H =\,<$, which is exactly what $(\dot t_h)^H$ is.
    \end{proof}
      
By the claim, there is a unique $t \in A^{<\omega}$ so that $t = (\dot t_h)^H$, for some, or equivalently all, $H \in \Gamma$ with $p \in H$ and $\dot z^H = x$. We let $F(x) = (\xi, t)$, where $(p, \dot z, a, h)$ is the $\xi^\textrm{th}$ element of $V$ according to $\prec$. To see that this is an injection, assume that $x$ and $y$ both yield the same $(p, \dot z, a, h)$ and $t$. Let $H,K\in\Gamma$ with $p \in H, K$, and with $\dot z^H = x$, $\dot z^K = y$. By our definition, $ t = (\dot t_h)^H = (\dot t_h)^K$, and according to the claim, $x = \dot z^H = \dot z^K = y$.  This finishes the definition of $F$. 

The definition we have just given can be rephrased as $$\text{ $F(x) = y$ iff $\varphi(x, y, \Gamma, <)$},$$ where $\varphi$ is a first order formula using the parameters $\Gamma$ and $<$, and the only parameters that are not shown are parameters from $V$, such as the class $\prec$ or the class of tuples $(p, \dot z, a, h)$ so that (1) and (2) hold. Simply let $$\dot F = \{ (p, (\dot x, \dot y)^\bullet) : \dot x, \dot y \in \HS \wedge p \Vdash_P \varphi(\dot x, \dot y, \dot \Gamma, \dot <) \},$$ where the parameters from $V$ in $\varphi$ are replaced by their check-names. Then, $\dot F\subseteq P\times\HS$, and $\sym(\dot F) = \sG$, so $\dot F$ is a class $\sS$-name, as desired. 

\medskip
It follows that $\OP$ holds in any symmetric extension by $\sS$ since by Lemma \ref{lem:Alin} and Fact \ref{fact:namebydef}, there is an $\sS$-name for a linear order of $\Ord \times \dot A^{<\omega}$, which can be pulled back to produce a class that is a linear order of the sets of our symmetric extension using~$F$.
  \end{proof}

\section{Higher dependent choice}

Recall the symmetric $P$-name $\dot \Gamma = \{\pi(\dot G) : \pi \in \sG \}^\bullet$ from the previous proof. We need the following fairly general result:

\begin{lemma}\label{lemma:define}
    Let $\dot x$ be a $P$-name and $e\in [\lambda\times\lambda]^{<\omega}$ so that $\fix(e) \leq \sym(\dot x)$. Whenever $G$ is $P$-generic, $x = \dot x^G$ and $\Gamma = \dot \Gamma^G$, then $x$ is definable in $V[G]$ from elements of $V$, from $\Gamma$ and from $\langle g_\gamma\mid\gamma\in e\rangle$, as the only parameters.
\end{lemma}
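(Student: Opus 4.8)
The plan is to exhibit $x=\dot x^G$ inside $V[G]$ as \emph{the unique set $u$ for which there is some $H\in\Gamma$ with $\dot x^H=u$ and $\dot g_\gamma^H=g_\gamma$ for every $\gamma\in e$}. Every ingredient of this description is admissible: $\Gamma$ and the tuple $\langle g_\gamma\mid\gamma\in e\rangle$ are among the allowed parameters, while $\dot x$, the finite set $e$, and the tuple $\langle\dot g_\gamma\mid\gamma\in e\rangle$ all lie in $V$; moreover every $H\in\Gamma$ is a subset of $P$ and hence a set in $V[G]$, and the evaluation operation $(\dot y,H)\mapsto\dot y^H$ is definable without parameters. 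Existence, with value $x$, is immediate: taking $\pi=\id$ shows $G=\dot G^G\in\Gamma$, and $\dot g_\gamma^G=g_\gamma$, $\dot x^G=x$. So the whole argument reduces to the uniqueness claim: any $H\in\Gamma$ with $\dot g_\gamma^H=g_\gamma$ for all $\gamma\in e$ must satisfy $\dot x^H=x$.

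For the uniqueness claim I would proceed as in the proof of Theorem~\ref{theorem:classfunction}. Recall from there that every $H\in\Gamma$ is itself $P$-generic over $V$, that $\dot\Gamma^H=\Gamma$, and that for any two elements $H,K$ of $\Gamma$ there is $\pi\in\sG$ with $K=\pi(\dot G)^H=\pi^{-1}[H]$, whence $\dot y^K=\pi(\dot y)^H$ for every $P$-name $\dot y$. Given $H$ as above, apply this with $K=G$ and fix $\pi\in\sG$ with $G=\pi^{-1}[H]$. Then for each $\gamma\in e$, using $\pi(\dot g_\gamma)=\dot g_{\pi(\gamma)}$ from Section~\ref{section:symmetricsystem},
\[g_\gamma=\dot g_\gamma^G=\pi(\dot g_\gamma)^H=\dot g_{\pi(\gamma)}^H,\]
while $\dot g_\gamma^H=g_\gamma$ by hypothesis, so $\dot g_{\pi(\gamma)}^H=\dot g_\gamma^H$. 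Since distinct names among the $\dot g_\delta$ are forced to have distinct realizations (discussed below), this gives $\pi(\gamma)=\gamma$; as $\gamma\in e$ was arbitrary, $\pi\in\fix(e)\le\sym(\dot x)$, so $\pi(\dot x)=\dot x$, and therefore $\dot x^H=\pi(\dot x)^H=\dot x^G=x$, as needed.

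The only point that requires genuine care — and the main obstacle in the write-up — is the claimed injectivity of $\delta\mapsto\dot g_\delta^H$, i.e.\ that $\Vdash_P\dot g_\delta\ne\dot g_{\delta'}$ whenever $\delta\ne\delta'$. For indices $(0,\beta),(0,\beta')$ with $\beta\ne\beta'$ these are distinct Cohen subsets of $\kappa$, handled by a routine density argument; for $(\alpha,\beta),(\alpha,\beta')$ with $\alpha>0$ and $\beta\ne\beta'$ one uses that towers satisfy $p_{\alpha,\beta}\ne p_{\alpha,\beta'}$ whenever both indices lie in their domain (the third clause of Definition~\ref{definition:tower}), so that a density argument forces $\dot g_{\alpha,\beta}$ and $\dot g_{\alpha,\beta'}$ to disagree at some coordinate of their common domain. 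This is exactly the fact already used, for the tuples $\dot t_h$, in the proof of Theorem~\ref{theorem:classfunction} in order to pass from $(\dot t_h)^H=\pi(\dot t_h)^H$ to $\pi\in\fix(a)$, so no new idea is needed here; the rest is routine bookkeeping with the displayed formula.
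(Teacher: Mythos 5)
Your proof is correct and takes essentially the same approach as the paper: both characterize $x$ via those $H\in\Gamma$ agreeing with $G$ on the $\dot g_\gamma$ for $\gamma\in e$, obtain $\pi\in\sG$ relating $H$ and $G$, and conclude $\pi\in\fix(e)$ from the injectivity of $\delta\mapsto\dot g_\delta^H$. The paper packages the definition as a union rather than a unique witness and treats the injectivity step as immediate (``this is only possible if $\pi\in\fix(e)$''), while you make that step explicit; otherwise the arguments coincide.
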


\begin{proof}
    In $V[G]$, define $y$ to consist exactly of those $z$ so that $z \in \dot x^H$ for some $H \in \Gamma$ with $\dot g_\gamma^G = \dot g_\gamma^H$ for all $\gamma\in e$. We claim that $x = y$. Clearly, $x \subseteq y$ as $G \in \Gamma$. Now suppose that $H \in \Gamma$ is arbitrary, so that $\dot g_\gamma^G = \dot g_\gamma^H$ for all $\gamma\in e$. Then, $H = \pi(\dot G)^G$, for some $\pi \in \sG$. We obtain that $\dot g_\gamma^G = \dot g_\gamma^H = \pi(\dot g_\gamma)^G = \dot g_{\pi(\gamma)}^G$, for each $\gamma\in e$. But this is only possible if $\pi \in \fix(e)$. So also $\dot x^H = \pi(\dot x)^G = \dot x^G$, and we are done. 
\end{proof}

A key idea of our forcing construction is captured by the following lemma.

\begin{lemma}\label{lemma:supportup}
        Let $p \in P$ and $\dot y \in \HS$ have finite support $e_0 \subseteq t(p, e_1)$, for some $e_1 \in [t(p)]^{<\omega}$. Then, there is $\dot y^* \in \HS$ with support $e_1$ such that $p \Vdash \dot y = \dot y^*$.
\end{lemma}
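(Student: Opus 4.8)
The plan is to follow the template of the proof of Lemma~\ref{lemma:topintersection}. Let $\dot y' = \{(s,\tau)\mid\exists r\,((r,\tau)\in\dot y\ \land\ s\le r\ \land\ s\le p)\}$; then $p\forces\dot y'=\dot y$, every condition occurring in a pair of $\dot y'$ extends $p$, and the components of $\dot y'$ are hereditarily symmetric since those of $\dot y$ are. Set $\dot y^*=\bigcup_{\pi\in\fix(e_1)}\pi(\dot y')$. Then $\dot y^*\in\HS$ with $\fix(e_1)\le\sym(\dot y^*)$ is immediate, and since $\id\in\fix(e_1)$ we get $p\forces\dot y\subseteq\dot y^*$. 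So the whole content is to show $p\forces\dot y^*\subseteq\dot y$, i.e.\ that $\pi(\dot y')^G\subseteq\dot y^G$ whenever $G\ni p$ is $P$-generic and $\pi\in\fix(e_1)$. If $\pi(p)\notin G$ this is trivial, as every condition occurring in $\pi(\dot y')$ extends $\pi(p)$. So assume $\pi(p)\in G$; then $p$ and $\pi(p)$ are compatible, and hence $p\cup\pi(p)$ is a (HAD, complete) condition in $P$.

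The crucial point is the following claim, which is where I expect the only real work to lie: \emph{if $\pi\in\fix(e_1)$ and $p$ and $\pi(p)$ are compatible, then $\pi$ fixes $t(p,e_1)$ pointwise.} I would prove this by induction along the level stratification $t(p,e_1)=\bigcup_{n<\omega}t^n(p,e_1)$. The base case is $t^0(p,e_1)=e_1$ (using $e_1\subseteq t(p)$), which $\pi$ fixes by assumption. For the inductive step, suppose $\pi$ fixes $t^n(p,e_1)$ pointwise and let $\gamma=(\alpha,\beta)\in t^n(p,e_1)$ with $\alpha>0$; since the tower part of $p$ is complete, $\gamma\in\dom p$. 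As $\pi(\gamma)=\gamma$ we have $\pi_\alpha(\beta)=\beta$, so $(\alpha,\beta)\in\dom\pi(p)$ and, by the definition of the action, $\pi(p)_{\alpha,\beta}=\pi(p_{\alpha,\beta})$. Since $p\cup\pi(p)$ is a function, $p_{\alpha,\beta}=\pi(p_{\alpha,\beta})$, which forces $\pi$ to fix every element of $\range p_{\alpha,\beta}$. As $t^{n+1}(p,e_1)$ is obtained from $t^n(p,e_1)$ by adjoining exactly the sets $\range p_\gamma$ for such $\gamma$, $\pi$ fixes $t^{n+1}(p,e_1)$ pointwise, completing the induction. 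Everything here hinges on two features of our setup: completeness of (the tower part of) $p$, which guarantees $\gamma\in\dom p$, and the fact that $p\cup\pi(p)$ being an honest condition pins down $p_\gamma=\pi(p_\gamma)$.

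Granting the claim, since $e_0\subseteq t(p,e_1)$ we obtain $\pi\restriction e_0=\id$, hence $\pi\in\fix(e_0)\le\sym(\dot y)$ and $\pi(\dot y)=\dot y$. Now suppose $a\in\pi(\dot y')^G$; pick $(s,\tau)\in\dot y'$ with $\pi(s)\in G$ and $a=\pi(\tau)^G$, and pick $r$ with $(r,\tau)\in\dot y$ and $s\le r$. Then $(\pi(r),\pi(\tau))\in\pi(\dot y)=\dot y$, and $\pi(s)\le\pi(r)$ together with $\pi(s)\in G$ gives $\pi(r)\in G$, so $a=\pi(\tau)^G\in\dot y^G$. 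This shows $\pi(\dot y')^G\subseteq\dot y^G$, which is what remained to be proved.
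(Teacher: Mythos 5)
Your proof is correct, and it takes a genuinely different route from the paper's. The paper derives this lemma as an application of Lemma~\ref{lemma:define}: since $\dot y^G$ is definable from $\Gamma$ and $\langle g_\gamma\mid\gamma\in e_0\rangle$, and below $p$ each $g_\gamma$ with $\gamma\in e_0$ is in turn recoverable (via a finite composition read off from $p$) from some $g_{\gamma'}$ with $\gamma'\in e_1$, one extracts a formula $\varphi$ and parameter $v\in V$ with $p\Vdash\dot y=\{w\mid\varphi(w,\dot\Gamma,\langle\dot g_{\gamma'}\mid\gamma'\in e_1\rangle^\bullet,\check v)\}$ and then defines $\dot y^*$ directly from $\varphi$ using the forcing relation. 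You instead repeat the symmetrization template of Lemma~\ref{lemma:topintersection}, setting $\dot y^*=\bigcup_{\pi\in\fix(e_1)}\pi(\dot y')$ and isolating the combinatorial core as a self-contained claim: if $\pi\in\fix(e_1)$ and $p\parallel\pi(p)$, then $\pi$ fixes $t(p,e_1)$ pointwise. Your inductive proof of that claim is correct; the key points are exactly the ones you flag (completeness of the tower part forces $\gamma\in\dom\bar p$, and compatibility of $\bar p$ with $\pi(\bar p)$ pins down $\bar p_{\alpha,\beta}=\pi(\bar p_{\alpha,\beta})$, which propagates the fixed-point property down the ranges). The advantage of your approach is that it is more elementary and bypasses the definability machinery of Lemma~\ref{lemma:define} entirely, and the extracted claim is a nice reusable principle in the spirit of the Restriction Lemma; the paper's approach is shorter given that Lemma~\ref{lemma:define} is already in place, and makes more transparent the conceptual reason the support can be lifted to $e_1$ (all the parameters in the definition of $\dot y^G$ become computable from the $e_1$-generics below $p$). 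Both arguments are sound.
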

\begin{proof}
  Using Lemma \ref{lemma:define}, whenever $G$ is $P$-generic, $y=\dot y^G$ and $\Gamma=\dot\Gamma^G$, then $y$ is definable (by a fixed formula that does not depend on the particular choice of generic $G$) in $V[G]$ from elements of $V$, from $\Gamma$ and from $\langle g_\gamma\mid\gamma\in e_0\rangle$ as the only parameters. Note that if $p \in G$, since $e_0 \subseteq t(p,e_1)$, each $g_\gamma$ for $\gamma \in e_0$ is definable in $V[G]$ from some $g_{\gamma'}$ with $\gamma'\in e_1$. More specifically, there is a finite sequence $n_0, \ldots, n_k$ of ordinals (in $V$, that can be read off from $p$) such that $p\forces \dot g_{\gamma'}(n_0)(n_1)\dots(n_k) = \dot g_\gamma$. So we can find a formula $\varphi$ such that $$p\forces\dot y=\{w\mid\varphi(w,\dot\Gamma,\langle \dot g_{\gamma'}\mid\gamma'\in e_1\rangle^\bullet,\check v)\}$$ for some $v\in V$.
  For some large enough $\xi$, define
  $$\dot y^*=\{(r,\dot w)\in P\times\HS_\xi\mid r\forces\varphi(\dot w,\dot\Gamma,\langle \dot g_{\gamma'}\mid\gamma'\in e_1\rangle^\bullet,\check v).$$
  We obtain that $\fix(e_1)\le\sym(\dot y^*)$ and $p\forces\dot y=\dot y^*$, as desired.
\end{proof}

\begin{theorem}
Let $G$ be $P$-generic. If $\lambda=\kappa$, then $V[G]_{\sS}$ is closed under ${<}\kappa$-sequences in $V[G]$. In particular thus, since $\DC_{<\kappa}$ holds in $V[G]\models\ZFC$, it follows that $\DC_{<\kappa}$ holds in $V[G]_{\sS}$. % $\sigma$-covering holds between $V[G]_\sS$ and $V[G]$. That is, whenever $x \in V[G]$ is so that $V[G] \models \vert x \vert = \omega$ and $x \subseteq V[G]_\sS$, there is $y \in V[G]_\sS$ so that $V[G]_\sS \models \vert y \vert = \omega$ and $x \subseteq y$.
\end{theorem}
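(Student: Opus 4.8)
The plan is to reduce the second assertion to the first, and to prove the closure statement by assembling hereditarily symmetric names for the entries of a short sequence into a single hereditarily symmetric name, using the HAD-tower lemmas of Section~\ref{section:towers} to make all the relevant supports uniform. For the reduction: once $V[G]_{\sS}$ is closed under ${<}\kappa$-sequences in $V[G]$, the models $V[G]$ and $V[G]_{\sS}$ contain precisely the same ${<}\kappa$-sequences of elements of $V[G]_{\sS}$. Hence, for $\delta<\kappa$, any $T\in V[G]_{\sS}$ that is, in $V[G]_{\sS}$, a tree without terminal nodes closed under increasing sequences of length $<\delta$ is again such a tree in $V[G]$: having no terminal nodes is upward absolute, and well-foundedness of predecessor-sets and closure under increasing ${<}\delta$-sequences transfer since any ${<}\kappa$-sequence of elements of $T$ lying in $V[G]$ already lies in $V[G]_{\sS}$. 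Applying $\DC_\delta$ in $V[G]\models\ZFC$ produces an increasing $\delta$-sequence through $T$ in $V[G]$, which is a $\delta$-sequence of elements of $V[G]_{\sS}$ and therefore lies in $V[G]_{\sS}$; so $\DC_\delta$ holds in $V[G]_{\sS}$ for every $\delta<\kappa$.

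For the closure, I would fix in $V[G]$ a sequence $\langle x_i\mid i<\delta\rangle$ with $\delta<\kappa$ and each $x_i\in V[G]_{\sS}$, and for each $i<\delta$ let $\dot y_i$ be the $\prec$-least hereditarily symmetric name with $\dot y_i^G=x_i$, where $\prec$ is the fixed global wellorder. All the $\dot y_i$ then lie in a set of hereditarily symmetric names of bounded rank, hence a set in $V$, so $\langle\dot y_i\mid i<\delta\rangle$ is a ${<}\kappa$-sequence of elements of a ground-model set; since $P=P_0\times P_1$ is ${<}\kappa$-closed ($P_0=\add(\kappa,\kappa)$ is ${<}\kappa$-closed and, as $\lambda=\kappa$, $P_1$ is ${<}\lambda={<}\kappa$-closed), this yields $\langle\dot y_i\mid i<\delta\rangle\in V$. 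Put $\dot s:=\langle\dot y_i\mid i\in\check\delta\rangle^\bullet\in V$, so $\dot s^G=\langle x_i\mid i<\delta\rangle$. It then suffices to show that $D:=\{q\in P\mid\exists\,\dot z\in\HS\ q\forces_P\dot z=\dot s\}$, which lies in $V$, is dense in $P$: any $\dot z$ witnessing some $q\in D\cap G$ satisfies $\dot z^G=\dot s^G=\langle x_i\mid i<\delta\rangle$, placing the latter in $V[G]_{\sS}$.

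To prove $D$ dense, let $p\in P$ be arbitrary, fix for each $i<\delta$ a finite support $e_i$ of $\dot y_i$, and set $E:=\bigcup_{i<\delta}e_i$, which has size $<\kappa\le\lambda$. First I would extend $p$ to some $p^+\le p$ with $E\subseteq t(p^+)$, iterating Lemma~\ref{lemma:addtotarget} (and the trivial enlargement of $\dom p_0$ for coordinates of the form $(0,\beta)$) fewer than $\kappa$ times and invoking ${<}\kappa$-closure. Next, by (the evident analogue for conditions in $P$ of) Lemma~\ref{lemma:singletoncover}, I obtain $q\le p^+$ and $\alpha^*<\lambda$ with $t(q)=t(q,\{(\alpha^*,0)\})$ and $E\subseteq t(p^+)=\ran(q_{\alpha^*,0})\subseteq t(q,\{(\alpha^*,0)\})$. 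Then each $e_i\subseteq t(q,\{(\alpha^*,0)\})$ with $\{(\alpha^*,0)\}\in[t(q)]^{<\omega}$, so Lemma~\ref{lemma:supportup} provides $\dot y_i^*\in\HS$ with support $\{(\alpha^*,0)\}$ and $q\forces_P\dot y_i=\dot y_i^*$; taking $\dot y_i^*$ to be $\prec$-least such makes $\langle\dot y_i^*\mid i<\delta\rangle$ definable from $\langle\dot y_i\mid i<\delta\rangle$, $q$ and $\alpha^*$, hence an element of $V$. Finally I would set $\dot z:=\langle\dot y_i^*\mid i\in\check\delta\rangle^\bullet\in V$: every $\pi\in\fix(\{(\alpha^*,0)\})$ fixes each $\dot y_i^*$ and hence $\dot z$, so $\fix(\{(\alpha^*,0)\})\le\sym(\dot z)$, and since $\dot z$ is built from hereditarily symmetric names, $\dot z\in\HS$; and $q\forces_P\dot y_i^*=\dot y_i$ for all $i<\delta$ gives $q\forces_P\dot z=\dot s$. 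So $q\le p$ witnesses density of $D$.

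The step I expect to be the crux is the combination of names: since $E=\bigcup_{i<\delta}e_i$ is in general infinite, the naive candidate $\dot s$ itself need not be hereditarily symmetric, and it is exactly the HAD-tower structure that lets one, below a suitable condition, pull all finitely many relevant coordinates into the target of the single tower element $(\alpha^*,0)$ via Lemma~\ref{lemma:singletoncover} and then re-present each $\dot y_i$ with the common finite support $\{(\alpha^*,0)\}$ via Lemma~\ref{lemma:supportup}; everything after that is bookkeeping. A secondary point one must not skip is checking that the auxiliary sequences of names genuinely lie in $V$, which is where ${<}\kappa$-closure of $P$ and the consistent choice of $\prec$-least names are used.
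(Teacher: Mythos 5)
Your proof is correct and follows essentially the same route as the paper: the crux in both cases is to use ${<}\kappa$-closure of $P$ to get the auxiliary sequences of names into $V$, then iterate Lemma~\ref{lemma:addtotarget}, apply Lemma~\ref{lemma:singletoncover} to absorb the union of the finitely many supports into the target of a single coordinate $(\alpha^*,0)$, and finally apply Lemma~\ref{lemma:supportup} to re-present each entry with the common finite support $\{(\alpha^*,0)\}$. The only (cosmetic) difference is organizational: you package the argument as showing a dense set $D\in V$ is met by $G$, and you obtain the $V$-sequence of names by taking $\prec$-least names and invoking ${<}\kappa$-closure, whereas the paper works directly with a condition $p\in G$ that it strengthens within $G$, constructing the name sequence by a $\delta$-step recursion in $V$.
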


\begin{proof}
Let $\vec x$ be a $\delta$-sequence of elements $\langle x_\epsilon\mid\epsilon<\delta\rangle$ of $V[G]_{\sS}$ in $V[G]$, for some cardinal $\delta<\kappa$. Let $x=\{x_\epsilon\mid\epsilon<\delta\}$ denote the range of $\vec x$, and let $\dot x$ and $\dot{\vec x}$ be $P$-names for $x$ and $\vec x$ respectively. For some $p\in G$ and some large enough ordinal~$\xi$, $p \Vdash \dot x \subseteq \HS_\xi^\bullet$. By further strengthening $p$, using that $P$ is ${<}\kappa$-closed, we can find a sequence of $\sS$-names $\langle\dot x_\epsilon\mid \epsilon<\delta\rangle$ so that $p \Vdash \dot{\vec x}$ is a function with domain $\delta$ and $\forall \epsilon<\delta\ \dot{\vec x}(\epsilon) = \dot x_\epsilon$. For each $\epsilon<\delta$, there is $e_\epsilon \in [\kappa\times \kappa]^{<\omega}$ so that $\fix(e_\epsilon) \leq \sym(\dot x_\epsilon)$. Let $\alpha<\kappa$ be a large enough ordinal so that for each $\epsilon<\delta$, there is such $e_\epsilon$ in $[\alpha\times \kappa]^{<\omega}$, and such that $\alpha\ge\dom(p)$. Let $e=\bigcup_{i<\delta}e_i$, which is of size at most $\delta<\kappa$. Using Lemma~\ref{lemma:addtotarget}, the ${<}\kappa$-closure of $P$, and Lemma~\ref{lemma:singletoncover}, let $q\le p$, $\alpha^*\ge\alpha$, and let $q \in G$ be a HAD tower with the property that $e \subseteq t(q)=t(q,\{(\alpha^*,0)\})$.

\medskip

 Fix some $\epsilon<\delta$. By Lemma \ref{lemma:supportup}, we find $\dot x_\epsilon'\in\HS$ with support $\{(\alpha^*,0)\}$ such that $q\forces\dot x_\epsilon=\dot x_\epsilon'$. Let $\dot{\vec y}=\langle\dot x_\epsilon'\mid \epsilon<\delta\rangle^\bullet$. Then, $\fix(\{(\alpha^*,0)\})\le\sym(\dot{\vec y})$, and we obtain that $\vec x=\dot{\vec y\,}^G\in V[G]_{\sS}$, as desired.
\end{proof}

\begin{theorem}
    Let $G$ be $P$-generic. If $\lambda>\kappa^+$, then $\DC_{<\lambda}$ holds in $V[G]_{\sS}$.
\end{theorem}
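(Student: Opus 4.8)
The plan is to fix a cardinal $\delta < \lambda$ and a name $\dot T$ for a tree of height $\delta$, closed under increasing sequences of length $<\delta$ and without terminal nodes, together with a condition $p$ forcing all of this; and to produce (a name for) an increasing $\delta$-sequence through $\dot T$. The difficulty compared to the previous theorem is that we cannot simply quote $<\!\kappa$-closure of $P$ together with closure of the symmetric model under $<\!\kappa$-sequences, since now $\delta$ may well exceed $\kappa$ (though $\delta < \lambda$). So the symmetric model is \emph{not} closed under $\delta$-sequences in general, and we must instead build the branch recursively inside the symmetric model, using at each step that $\DC_{<\kappa}$-style closure still holds "locally" and, crucially, that $\DC_{<\lambda}$ does hold in the full generic extension $V[G]\models\ZFC$ (since $\lambda$ is regular there, being a preserved cardinal). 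The role of the symmetric machinery will be to show that this $V[G]$-branch can be captured by a symmetric name.

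First I would argue in $V[G]$: let $\langle t_\epsilon \mid \epsilon < \delta\rangle$ be a branch through $T = \dot T^G$, which exists by $\DC_\delta$ in $V[G]$. Each $t_\epsilon \in V[G]_\sS$ has, by Theorem~\ref{theorem:minsup} applied in $V$, a minimal support, so there is a name $\dot t_\epsilon \in \HS$ and a condition below $p$ deciding it; by $<\!\lambda$-closure of $P_1$ (and $\kappa^+$-cc of $P_0$, plus $\delta < \lambda$ regular in $V$) one can, after strengthening, arrange a single $q \le p$, $q \in G$, and a $V$-sequence $\langle \dot t_\epsilon \mid \epsilon < \delta\rangle$ of $\sS$-names with $q \Vdash \dot{\vec t}(\epsilon) = \dot t_\epsilon$ for all $\epsilon$, where $\dot{\vec t}$ names our chosen branch. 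Here is where the $\lambda > \kappa^+$ hypothesis is used: it guarantees that $\delta^{<\kappa}$-many supports, each a finite subset of $\lambda\times\lambda$, together have union of size $<\lambda$, so that we can find finite $e_\epsilon \in [\lambda\times\lambda]^{<\omega}$ with $\fix(e_\epsilon)\le\sym(\dot t_\epsilon)$ and with $e := \bigcup_{\epsilon<\delta} e_\epsilon$ of size $\le \delta < \lambda$; then using Lemma~\ref{lemma:addtotarget}, $<\!\lambda$-closure of $P_1$, and Lemma~\ref{lemma:singletoncover} exactly as in the previous proof, we pass to $q \le p$, $q \in G$, and $\alpha^* < \lambda$ with $e \subseteq t(q) = t(q,\{(\alpha^*,0)\})$. (Unlike before, we do not get $q \in G$ "for free" from $<\!\kappa$-closure inside the argument; instead we use that the collection of such $q$ is dense below $p$ and apply genericity of $G$.)

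Next, for each $\epsilon < \delta$, Lemma~\ref{lemma:supportup} gives $\dot t_\epsilon' \in \HS$ with support $\{(\alpha^*,0)\}$ and $q \Vdash \dot t_\epsilon = \dot t_\epsilon'$. Setting $\dot{\vec y} = \langle \dot t_\epsilon' \mid \epsilon < \delta\rangle^\bullet$, we get $\fix(\{(\alpha^*,0)\}) \le \sym(\dot{\vec y})$, so $\dot{\vec y}$ is an $\sS$-name, and $\vec y := \dot{\vec y}^G \in V[G]_\sS$. Since $q \in G$, $\vec y$ equals our branch $\langle t_\epsilon \mid \epsilon < \delta\rangle$, which is increasing through $T$; hence in $V[G]_\sS$ the tree has a branch of length $\delta$. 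As $p$ and $\dot T$ were arbitrary (with $p$ ranging over a dense set forcing "$\dot T$ is a tree of the relevant kind"), by a standard density/genericity argument $\DC_\delta$ holds in $V[G]_\sS$; since $\delta < \lambda$ was arbitrary, $\DC_{<\lambda}$ holds there.

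I expect the main obstacle to be the bookkeeping in the second paragraph: ensuring that a \emph{single} condition $q \in G$ simultaneously decides the values $\dot t_\epsilon$ for all $\epsilon < \delta$ as fixed $\sS$-names. One must be careful that $P = P_0 \times P_1$ is not $<\!\lambda$-closed as a whole ($P_0 = \add(\kappa,\lambda)$ is only $<\!\kappa$-closed), so the argument cannot proceed by a naive $\delta$-length descending sequence in $P$; rather one should work in $V[G]$, note that the branch already exists, and only \emph{extract} names, which requires observing that for each $\epsilon$ the set of conditions in $G$ deciding $\dot{\vec t}(\epsilon)$ to be a specific $\sS$-name is nonempty, and then amalgamating finitely/${<}\lambda$-many pieces of this data using density of the HAD-tower part together with $\kappa^+$-cc of $P_0$ to keep the relevant antichains small. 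Alternatively — and perhaps more cleanly — one avoids amalgamation entirely by reworking the previous theorem's proof to take, in $V[G]$, an arbitrary $\delta$-sequence $\langle x_\epsilon \mid \epsilon<\delta\rangle$ of elements of $V[G]_\sS$ whose \emph{union of minimal supports} happens to have size $<\lambda$, show such sequences are in $V[G]_\sS$, and then observe that a $\DC_\delta$-branch chosen greedily (always extending along the $<$-least available successor, using the name $\dot<$ from Lemma~\ref{lem:Alin}) automatically has supports contained in a bounded initial segment $\alpha\times\lambda$ — the only genuinely new point being that $\lambda$ regular and $\lambda>\kappa^+$ makes "$<\lambda$-many finite supports have small union" true.
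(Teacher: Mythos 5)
Your plan has a genuine gap, and both of the routes you sketch for closing it run into the same fundamental obstacle: you cannot, in general, take a branch of $T$ produced in $V[G]$ and show that it lies in the symmetric extension, because the assignment $\epsilon\mapsto x_\epsilon$ (or $\epsilon\mapsto\dot x_\epsilon$) can use information from $G$ that has no symmetric name, even when each individual $x_\epsilon$ does. Concretely, for your first idea: to extract from $\dot{\vec t}$ a single $q\in G$ together with a \emph{ground-model} $\delta$-sequence of symmetric names $\langle\dot t_\epsilon\mid\epsilon<\delta\rangle$ you would need to amalgamate $\delta$-many conditions from $G$, one per antichain deciding $\dot{\vec t}(\epsilon)$. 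This is possible when $\delta<\kappa$ (that is exactly how the previous theorem is proved, using $<\kappa$-closure of $P$), but once $\delta>\kappa$ the $\add(\kappa,\lambda)$-component is not $<\delta$-closed and the $P_0$-parts of these conditions will typically pile up to a domain of size $\ge\kappa$; the $\kappa^+$-cc of $P_0$ controls the \emph{number} of possible values per antichain but does nothing to let you glue together $\delta$-many conditions of $G$ into one.

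Your second, ``cleaner'' alternative is based on a false claim: it is \emph{not} true that a $\delta$-sequence in $V[G]$ of elements of $V[G]_\sS$, all of whose minimal supports are contained in some fixed set of size $<\lambda$, must itself lie in $V[G]_\sS$. The sequence $\langle g_{0,\beta}(0)\mid\beta<\kappa\rangle$ (when $\kappa<\lambda$) is a counterexample in spirit: each coordinate is a single bit with empty support, but the canonical name for the sequence is fixed only by permutations fixing $\kappa$-many coordinates, not by any $\fix(e)$ with $e$ finite, and indeed the sequence of Cohen generics is not symmetric. Bounded supports of the entries do not make the \emph{enumeration} symmetric. Likewise, there is no reason a ``greedy'' branch chosen via $\dot<$ should have supports living in a bounded initial segment $\alpha\times\lambda$; the successors in an arbitrary tree $T\in V[G]_\sS$ can have supports spread across all of $\lambda\times\lambda$.

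The paper avoids both problems by never trying to capture a $V[G]$-branch symmetrically. Instead it builds, by a $\delta$-length recursion in $V$ that descends only on the $P_1$-side (keeping the $\add(\kappa,\lambda)$-coordinate $p^0_0$ fixed, so that $<\lambda$-closure of $P_1$ suffices), a condition $p$ whose target is covered by a single $\gamma$, and defines a \emph{shadow tree} $\tilde T$ on $X\times t(p)^{<\omega}$ (pulled back via the injection $F$ of Theorem~\ref{theorem:classfunction}). By the restriction lemma, $\tilde T$ lives in the $\ZFC$ model $V(g_\gamma)\subseteq V[G]_\sS$; the recursion was engineered precisely so that $\tilde T$ is $<\delta$-closed \emph{in $V(g_\gamma)$} (each step plants upper bounds for all potential $<\delta$-chains visible at that stage, and the $\kappa^+$-cc of $P_0$ plus regularity of $\delta>\kappa$ ensure every chain in $V(g_\gamma)$ is caught at some earlier stage). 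Then a branch of $\tilde T$ is found using $\ZFC$ \emph{inside} $V(g_\gamma)$, and transported back to $T$ via $F^{-1}$, yielding a branch in $V[G]_\sS$. This ``build the tree inside an intermediate $\ZFC$ submodel of the symmetric extension'' step is the missing idea in your proposal.
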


\begin{proof}
     Suppose that $\dot T$ is an $\mathcal{S}$-name for a ${<}\delta$-closed (in the symmetric extension) tree without terminal nodes, where, without loss of generality, $\kappa < \delta < \lambda$ is regular.
     Let $p^0=(p^0_0,\bar p^0) \in P$ be arbitrary. By possibly strengthening $p^0$, we may assume that the support of $\dot T$ is contained in $t(p^0)$. We want to find a condition $q\le p^0$ forcing that $\dot T$ contains an increasing sequence of length $\delta$ in order to verify the theorem. Fix a name $\dot F$ as obtained from Theorem~\ref{theorem:classfunction}. We will recursively define a decreasing sequence $\langle p^\xi : \xi < \delta \rangle$ in $P$, with each $p^\xi$ of the form $p^\xi=(p^0_0,\bar p^\xi)$, and a $\subseteq$-increasing sequence $\langle X_\xi : \xi < \delta \rangle$, where $X_\xi \subseteq \Ord$ and $\vert X_\xi \vert < \lambda$, for each $\xi < \delta$. Initially, we are already given $p^0$ and we let $X_0 = \emptyset$. At limit steps $\xi<\delta$, we let $X_{\xi} = \bigcup_{\xi' < \xi} X_{\xi'}$ and we pick $p^\xi$ to be a lower bound for $\langle p^{\xi'} : \xi' < \xi \rangle$. At successor steps, given $p= p^\xi$ and $X = X_\xi$, we proceed as follows. 

    First, by extending $p$, using Lemma \ref{lemma:singletoncover}, we can assume that there is $\gamma \in t(p)$ such that $t(p)=t(p, \{\gamma\})$. Fix, for now, a $P$-generic $G$ with $p \in G$, and let $T := \dot T^G$, $F := \dot F^G$ and $g_{\alpha, \beta} := \dot g_{\alpha, \beta}^G$, for every $(\alpha, \beta) \in \lambda \times \lambda$. Note that the least $\ZF$-model extending $V$ and containing $g_\gamma$ as an element is $$V(g_{\gamma}) = V[\langle g_{0,\beta}  : (0, \beta) \in t(p)\rangle],$$ which is an $\add(\kappa, t(p)\cap(\{0\}\times\lambda))$-generic extension, and thus a model of $\ZFC$. %, with the cardinality of $t(p)\cap(\{0\}\times\lambda)$ equal to $\lambda'$ for some cardinal $\lambda' < \lambda$.
    Moreover define $A_p := \{ g_{\gamma'} : \gamma' \in t(p) \} \in V(g_\gamma)$ and note that $A_p$ has size $< \lambda$. In particular, $V(g_\gamma) \models \vert (X \times A_p^{<\omega})^{<\delta} \vert < \lambda$. Whenever $\langle (\eta_i, a_i) : i < \delta' \rangle \in (X \times A_p^{<\omega})^{<\delta} \cap V(g_\gamma)$, the sequence $\langle F^{-1}(\eta_i, a_i) : i < \delta' \rangle$ may or may not be a chain in~$T$. In case it is, since $T$ is closed under increasing sequences of length less than~$\delta$, there is some $(\eta, e) \in \Ord \times (\lambda \times \lambda)^{<\omega}$, so that $F^{-1}(\eta, g_e)$ is an upper bound, where~$g_e$ is defined as $\langle g_{e_i} : i < \vert e\vert \rangle$ when $e=\langle e_i\mid i<\vert e\vert\rangle$. All in all, in $V[G]$, there is $Y \subseteq \Ord$ and $E \subseteq \lambda \times \lambda$, both of size ${<}\lambda$, such that we can find pairs $(\eta,e)$ witnessing any of the above described instances within $Y \times E^{<\omega}$. Using the $\lambda$-cc of~$P_0$ (this uses that $\kappa^+ \leq \lambda$) and the ${<}\lambda$-closure of $P_1$, back in $V$, we can find $q \leq p$ of the form $q=(p^0_0,\bar q)$ and sets $Y$ and $E$ such that $q$ forces that $Y$, $E$ are as just described. Finishing our recursive definitions, let $X_{\xi +1} = X \cup Y$ and $p^{\xi +1 } \leq q$ such that $E\subseteq t(p^{\xi +1})$.
    Let $p$ be the greatest lower bound of $\langle p^\xi : \xi < \delta \rangle$, and let $X = \bigcup_{\xi < \delta } X_\xi$.  Using Lemma \ref{lemma:singletoncover}, let $q \leq p$ be such that $t(q) = t(q, \{\gamma\}) = t(p) \cup \{ \gamma \}$ for some $\gamma \in \lambda \times \lambda$. 
    
    Now suppose that $q \in G$, for a $P$-generic $G$. We let $T$, $F$ and $g_{e}$, for $e \in (\lambda \times \lambda)^{<\omega}$, be the evaluations by $G$ of the corresponding names just as before. Let $$\tilde T = \{ \left((\eta_0, e_0),(\eta_1, e_1)\right) \in (X \times t(p)^{<\omega})^2 : F^{-1}(\eta_0, g_{e_0}) <_T F^{-1}(\eta_1, g_{e_1}) \},$$ where $<_T$ is the order of $T$.
    Note, by Lemma~\ref{lem:restriction}, that $\tilde T \in V(g_\gamma)$, for all names used in its definition have supports that are contained in $t(p)$.

    \begin{claim}
        $\tilde T$ is ${<}\delta$-closed in $V(g_\gamma)$.
    \end{claim}

    \begin{proof}
        Let $\langle (\eta_i, e_i) : i < \delta' \rangle \in V(g_\gamma)$ be a decreasing sequence in $\tilde T$, for some $\delta' < \delta$. Remember that $V(g_\gamma) = V[\langle g_{0, \beta}: (0, \beta) \in t(p) \rangle]$, which is an $\add(\kappa, t(p)\cap(\{0\}\times\lambda))$-generic extension of $V$. Since $\add(\kappa, t(p)\cap(\{0\}\times\lambda))$ has the $\kappa^+$-cc and $\kappa < \delta $, there is $J \subseteq t(p)\cap(\{0\}\times\lambda)$ of size $ < \delta$ so that $\langle (\eta_i, e_i) : i < \delta' \rangle \in V[\langle g_{0, \beta}: \beta \in J \rangle]$. By the regularity of $\delta$, there is $\xi < \delta$ such that $\eta_i \in X_\xi$ and $e_i \subseteq t(p_\xi)$ for each $i < \delta'$, and such that $0\times J \subseteq t(p_\xi)=t(p_\xi, \{\gamma'\})$ for some (unique) $\gamma' \in t(p_\xi)$. In particular then, $\langle (\eta_i, e_i) : i < \delta' \rangle \in V(g_{\gamma'})$, and we ensured in the next step of our above recursive construction that there is an upper bound in $V(g_\gamma)$.
    \end{proof}

    Finally, constructing a branch $\langle (\eta_i, e_i) : i < \delta \rangle$ through $\tilde T$ in $V(g_\gamma)\models\ZFC$, we find that $\langle F^{-1}(\eta_i, g_{e_i}) : i < \delta \rangle$ is a branch through $T$ in $V[G]_\sS$. 
\end{proof}

\bibliographystyle{plain}

\end{document}